\def\bibfont{\small}
\def\bibsep{\smallskipamount}
\def\bibhang{24pt}
\def\newblock{\ }
\def\BIBand{and}
\def\SingleSpacedXI{\linespread{1.05}}
\newtheorem{proposition}{Proposition}
\newtheorem{corollary}{Corollary}
\theoremstyle{definition}
\theoremstyle{remark}
\newtheorem{remark}{Remark}
\newcommand{\R}{\ensuremath{\mathbb{R}}}
\newcommand{\Z}{\ensuremath{\mathbb{Z}}}
\newcommand{\N}{\ensuremath{\mathbb{N}}}
\newcommand{\B}{\ensuremath{\mathbb{B}}}
\DeclareMathOperator*{\argmin}{arg\,min}
\newcommand{\CO}{\ensuremath{\text{CO}}}
\newcommand{\MICO}{CDO}
\newcommand{\PO}{\ensuremath{\text{PO}}}
\newcommand{\0}{\ensuremath{\textbf{0}}}
\newcommand{\1}{\ensuremath{\textbf{1}}}
\newcommand{\rev}[1]{{#1}}
\title[Simplex QP-based methods for minimizing a conic quadratic objective]{Simplex QP-based methods for  minimizing \\ a conic quadratic objective over polyhedra}
\author{Alper Atamt\"urk and Andr\'{e}s G\'{o}mez}
\thanks{ \noindent \hskip -5mm
	A. Atamt\"urk: Industrial Engineering \& Operations Research, University of California, Berkeley, CA 94720-1777.
	\texttt{atamturk@berkeley.edu}   \\
	A. G\'{o}mez: Department of Industrial Engineering, Swanson School of Engineering, University of Pittsburgh, PA 15261-3077. \texttt{agomez@pitt.edu}
}
\begin{document}

\begin{abstract}

We consider minimizing a conic quadratic objective over a polyhedron. Such problems arise in parametric 
value-at-risk minimization, portfolio optimization, and  robust optimization with ellipsoidal objective uncertainty; 
and they can be solved by polynomial interior point algorithms for conic quadratic optimization. However,
interior point algorithms are not well-suited for branch-and-bound algorithms for
the discrete counterparts of these problems due to the lack of effective warm starts necessary for the efficient
solution of convex relaxations repeatedly at the nodes of the search tree. 

In order to overcome this shortcoming, we reformulate the problem using the perspective of \rev{the quadratic function}. The perspective 
reformulation lends itself to simple coordinate descent and bisection algorithms utilizing the simplex 
method for quadratic programming, which makes the solution methods amenable to warm starts and suitable 
for branch-and-bound algorithms. We test the simplex-based quadratic programming algorithms to solve convex 
as well as discrete instances and compare them with the state-of-the-art approaches. 
The computational experiments indicate 
that the proposed algorithms scale much better than interior point algorithms and return higher precision solutions.
In our experiments, for large convex instances, they provide up to 22x speed-up. 
For smaller discrete instances, the speed-up is about 13x over a barrier-based branch-and-bound 
algorithm and 6x over the
LP-based branch-and-bound algorithm with extended formulations.\\

\noindent
\textbf{Keywords:}  Simplex method, conic quadratic optimization, quadratic programming, warm starts, 
value-at-risk minimization, portfolio optimization, robust optimization.
\end{abstract}

\maketitle

\begin{center} May 2017; May 2018 \end{center}

\BCOLReport{17.02}

\pagebreak
\section{Introduction}

Consider the minimization of a conic quadratic function over a polyhedron, i.e.,
\begin{equation*}
(\CO) \  \ \ \min_{x\in \R^n }\left\{c'x+\Omega\sqrt{x'Qx}: x \in X \right\},
\end{equation*}
where $c \in \R^n, \ Q \in \R^{n \times n}$ is a symmetric positive semidefinite matrix, $\Omega>0$, and $X \subseteq \R^n$ is a rational polyhedron.
We denote by \MICO \ the discrete counterpart of \CO \ with integrality restrictions: $X \cap \Z^n$. \CO \ and \MICO \ are frequently used to model utility with uncertain objectives as in parametric value-at-risk minimization \citep{EOO:worst-var}, portfolio optimization \citep{AJ:lifted-polymatroid}, and robust counterparts of linear programs with an ellipsoidal objective uncertainty set \citep{BenTal1998,BenTal1999,book:ro}.

Note that \CO \ includes linear programming (LP) and convex quadratic programming (QP) as special cases. The simplex method \citep{Dantzig1955,Wolfe1959,VanDePanne1964} is still the most widely used algorithm for LP and QP, despite the fact that polynomial interior point algorithms \citep{Karmarkar1984,Nesterov1994,Nemirovskii1996} are competitive with the simplex method in many large-scale instances. Even though  non-polynomial, the simplex method has some distinct advantages over interior point methods. Since the simplex method iterates over bases, it is possible to carry out the computations with  high accuracy and little cost, while interior point methods come with  a trade-off between precision and efficiency. Moreover, an optimal basis returned by the simplex method is useful for sensitivity analysis, while interior point methods do not produce such a basis unless an additional ``crashing" procedure is performed \citep[e.g.][]{Megiddo1991}. Finally, if the parameters of the problem change, re-optimization can often be done very fast with the simplex method starting from a primal or dual feasible basis, whereas warm starts with interior point methods have limitations \citep{YW:warmstart,CPT:warmstart}. 
In particular, fast re-optimization with the dual simplex method is crucial when solving discrete optimization problems with a  branch-and-bound algorithm.

\CO \ is a special case of conic quadratic optimization \citep{Lobo1998,Alizadeh2003}, which can be solved by polynomial-time interior points algorithms \citep{Alizadeh1995,Nesterov1998,BTN:ModernOptBook}.
Although \CO \ can be solved by a general conic quadratic solver, we show in this paper that iterative QP algorithms scale much better. In particular, simplex-based QP algorithms allowing warm starts perform \rev{much} faster than interior point methods for \CO.

For the discrete counterpart \MICO,  a number of different approaches are available for the special case with a diagonal $Q$ matrix: \citet{Ishii1981} give a polynomial time for optimization over spanning trees; \citet{Bertsimas2004} propose an approximation algorithm that solves series of linear integer programs;  \citet{Atamturk2008a} give a cutting plane algorithm utilizing the submodularity of the objective for the binary case; \citet{AG:mixed-polymatroid} give nonlinear cuts for the mixed 0-1 case;
\citet{Atamturk2009} give a parametric $O(n^3)$ algorithm for the binary case with a cardinality constraint.
Maximization of the same objective over the binaries is \NP-hard \cite{AA:utility}.


The aforementioned approaches do not extend to the non-diagonal case or to general feasible regions, which are obviously \NP-hard
as quadratic and linear integer optimization are special cases.
The branch-and-bound algorithm is the method of choice for general \MICO. 
However, branch-and-bound algorithms that repeatedly employ a nonlinear programming (NLP) solver at the nodes of the search tree are typically hampered by the lack of effective warm starts.  \citet{Borchers1994} and \citet{Leyffer2001} describe NLP-based branch-and-bound algorithms, and they give methods that branch without solving the NLPs to optimality, reducing the computational burden for the node relaxations. On the other hand, LP-based branch-and-bound approaches employ linear outer approximations of the nonlinear terms. This generally results in weaker relaxations at the nodes, compared to the NLP approaches, but allows one to utilize warm starts with the simplex method. Therefore, one is faced with a trade-off between the strength of the node relaxations and the solve time per node. A key  idea to strengthen the node relaxations, as noted by \citet{Tawarmalani2005}, is to use extended formulations.
\citet{AN:conicmir} describe mixed-integer rounding inequalities in an extended formulation for conic quadratic integer programming.
\citet{Vielma2015} use an extended formulation for conic quadratic optimization that can be refined during branch-and-bound, and show that an LP-based branch-and-bound using the extended formulations typically outperforms the NLP-based branch-and-bound algorithms.
The reader is referred to \citet{jeff-minlp-review} for an excellent survey of the solution methods for mixed-integer nonlinear optimization.
\ignore{\cite{Vielma2008} use the extended formulation for SOCPs proposed by \cite{BenTal2001} to construct a tight initial LP approximation, and \cite{Hijazi2013} use univariate extended formulations for separable MINLPs.}
 
In this paper, we reformulate \CO \ through the perspective of \rev{the quadratic term} and give algorithms that solve a sequence of closely related QPs. Utilizing the simplex method, the solution to each QP is used to warm start the next one in the sequence, resulting in a small number of simplex iterations and fast solution times. Moreover, we show how to incorporate the proposed approach in a branch-and-bound algorithm, efficiently solving the continuous relaxations to optimality at each node and employing warm starts with the dual simplex method. Our computational experiments indicate that the proposed approach outperforms the state-of-the-art algorithms for convex as well as discrete cases. 

The rest of the paper is organized as follows. In Section~\ref{sec:formulation} we give an alternative formulation for \CO \ using the perspective function \rev{of the quadratic function}. In Section~\ref{sec:algorithms} we present coordinate descent and accelerated bisection algorithms that solve a sequence of QPs. In Section~\ref{sec:computational} we provide computational experiments, comparing the proposed methods with state-of-the-art barrier and other algorithms.

\section{Formulation}
\label{sec:formulation}

In this section we present a reformulation of \CO \ using the perspective function of \rev{the quadratic term}. 
Let $X=\left\{x\in\R^{n}:Ax=b, \ x \ge 0 \right\}$ be the feasible region of problem \CO. 
For convex quadratic $q(x) = x'Q x$, consider the function
$h:\R^{n+1}\to \R_+ \cup \{\infty\}$ defined as 
$$h(x,t)=\begin{cases}\frac{x'Qx}{t} & \text{if }t>0,\\ 0 & \text{if }x'Qx = 0, t =0,\\ +\infty & \text{otherwise.}\end{cases}$$
Observe that 
\begin{align*}
\nonumber
&\min \left\{c'x+\Omega\sqrt{x'Qx}: x \in X \right\}\\
\nonumber
=&\min\left\{c'x+\frac{\Omega}{2}h(x,t)+\frac{\Omega}{2}t : x \in X, \ t=\sqrt{x'Qx}\right\}\\
\geq & \ \zeta,
\end{align*}
where
\begin{align*}
 (\PO) \ \ \ \zeta = \min \left\{c'x+\frac{\Omega}{2}h(x,t)+\frac{\Omega}{2}t: x \in X, \ t\geq 0\right\}.
\end{align*}
\ignore{ 
The equality in \eqref{eq:redundant} holds since we are only introducing a redundant variable, in \eqref{eq:substitution} we are substituting in the objective, and the inequality in \PO \ holds because we relax the non-convex constraint into a nonnegativity constraint. }


We will show that problems \CO \ and \PO \ have, in fact, the same optimal objective value and that there is a one-to-one correspondence between the optimal primal-dual pairs of both problems. 

\begin{proposition}
\label{prop:convexity}
Problem \PO \ is a convex optimization problem.
\end{proposition}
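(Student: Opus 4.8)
The plan is to reduce the statement to the convexity of the single nonlinear term $h$. First I would observe that the objective of \PO{} is $c'x+\frac{\Omega}{2}h(x,t)+\frac{\Omega}{2}t$, i.e.\ the sum of the affine function $c'x+\frac{\Omega}{2}t$ and the term $\frac{\Omega}{2}h(x,t)$ with $\Omega>0$. Since affine functions are convex, nonnegative multiples of convex functions are convex, and sums of convex functions are convex (no $\infty-\infty$ issue arises because the affine part is finite-valued), it suffices to show that $h:\R^{n+1}\to\R_+\cup\{\infty\}$ is convex. The feasible region $\{(x,t):x\in X,\ t\ge 0\}$ is the intersection of the polyhedron $X$ with a halfspace, hence convex, so once $h$ is known to be convex, \PO{} is the minimization of a convex extended-real-valued objective over a convex set.

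To prove that $h$ is convex I would describe its epigraph explicitly. Since $Q\succeq 0$, write $Q=F'F$ for some matrix $F$ (e.g.\ $F=Q^{1/2}$), so that $x'Qx=\|Fx\|_2^2$. The key identity to establish is
\[
\operatorname{epi}(h)=\bigl\{(x,t,s)\in\R^{n}\times\R\times\R:\ \|Fx\|_2^2\le s\,t,\ s\ge 0,\ t\ge 0\bigr\}.
\]
This is checked by going through the three cases in the definition of $h$: for $t>0$, the inequality $s\ge h(x,t)=x'Qx/t$ is equivalent to $\|Fx\|_2^2\le st$, and then $s\ge 0$ is automatic; for $t=0$ with $x'Qx=0$ we have $h(x,t)=0$, while the right-hand set requires $\|Fx\|_2^2=0$ (i.e.\ $x'Qx=0$) together with $s\ge 0$; and for $t=0$ with $x'Qx>0$, or for $t<0$, there is no admissible $s$ on either side (on the right, $\|Fx\|_2^2\le st$ forces $Fx=0$ when $t=0$, and $t\ge 0$ fails when $t<0$).

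Finally I would note that the right-hand set is the preimage of the rotated second-order cone $\{(u,s,t):\|u\|_2^2\le s t,\ s\ge0,\ t\ge0\}$ under the linear map $(x,t,s)\mapsto(Fx,s,t)$, hence convex; therefore $\operatorname{epi}(h)$ is convex and $h$ is a convex function, which proves the proposition. (Equivalently, one may invoke the standard fact that the perspective of a convex function is convex, applied to the convex quadratic $q(x)=x'Qx$, and observe that $h$ is its closure.) The only point that requires care is the behavior on the slice $t=0$: one must verify that the extended-value definition of $h$ there matches the closed cone description above, i.e.\ that $h$ is lower semicontinuous at such points — which is precisely why $h$ is assigned the value $0$ (rather than being left undefined) when $x'Qx=0,\ t=0$.
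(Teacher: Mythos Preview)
Your proof is correct. The paper's own argument is essentially the one-line alternative you mention parenthetically at the end: it simply observes that $h$ is the closure of the perspective $t\,q(x/t)$ of the convex quadratic $q(x)=x'Qx$ and cites Hiriart-Urruty and Lemar\'echal for the convexity of that construction, then notes that the remaining objective terms and constraints are linear. Your primary route is more explicit and self-contained: factor $Q=F'F$, identify $\operatorname{epi}(h)$ with the inverse image of the rotated second-order cone $\{(u,s,t):\|u\|_2^2\le st,\ s\ge0,\ t\ge0\}$ under the linear map $(x,t,s)\mapsto(Fx,s,t)$, and conclude convexity of $h$ from convexity of that cone. This buys you a proof that does not depend on an external reference and, as a bonus, exhibits the conic representability of $h$; the paper's version buys brevity. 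Your careful case analysis on the slice $t=0$ is exactly the verification that $h$ is the \emph{closure} of the perspective (i.e., lower semicontinuous there), which the paper asserts but does not spell out.
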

\begin{proof}
It suffices to observe that $h$ is the closure of the \emph{perspective function} $t q(x/t)$
of the convex quadratic function $q(x)$, and is therefore convex \citep[e.g.][p. 160]{book:HUL-conv}. Since all other objective terms and constraints of \PO  \ are linear, \PO \  is a convex optimization problem.
\end{proof}

\begin{proposition}
\label{prop:equivalence}
Problems \CO \ and \PO \ are equivalent.
\end{proposition}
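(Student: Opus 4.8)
The plan is to establish equivalence in two directions: first show $\zeta \le \min(\CO)$, which was essentially already observed in the chain of inequalities preceding the statement, and then show $\zeta \ge \min(\CO)$ by taking an optimal (or near-optimal) solution $(x^*,t^*)$ of \PO \ and exhibiting a feasible point of \CO \ with objective at most $\zeta$. The key observation for the reverse direction is that for fixed $x$, the function $t \mapsto \frac{\Omega}{2}h(x,t)+\frac{\Omega}{2}t$ is minimized over $t \ge 0$ at $t = \sqrt{x'Qx}$, where it attains the value $\Omega\sqrt{x'Qx}$ (this is just the AM--GM inequality $\frac{a}{t}+t \ge 2\sqrt{a}$ when $x'Qx = a > 0$, and is immediate when $x'Qx = 0$ since then $h(x,0)=0$). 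Hence for any feasible $(x,t)$ of \PO, the point $x$ is feasible for \CO \ with $c'x+\Omega\sqrt{x'Qx} \le c'x+\frac{\Omega}{2}h(x,t)+\frac{\Omega}{2}t$, giving $\min(\CO) \le \zeta$. Combined with the reverse inequality already noted, $\zeta = \min(\CO)$.

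Next I would record the one-to-one correspondence between optimal solutions. Given an optimal $x^*$ of \CO, the pair $(x^*, \sqrt{(x^*)'Qx^*})$ is feasible for \PO \ and attains objective value $\min(\CO) = \zeta$, so it is optimal for \PO. Conversely, given an optimal $(x^*,t^*)$ of \PO, the argument above shows $x^*$ is feasible for \CO \ with objective $\le \zeta = \min(\CO)$, hence optimal for \CO; moreover, since the objective of \PO \ at $(x^*,t^*)$ equals $\zeta$ and $\frac{\Omega}{2}h(x^*,t^*)+\frac{\Omega}{2}t^* \ge \Omega\sqrt{(x^*)'Qx^*}$ with equality only at $t^* = \sqrt{(x^*)'Qx^*}$, optimality forces $t^* = \sqrt{(x^*)'Qx^*}$. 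This pins down the primal correspondence; the dual correspondence then follows by matching KKT multipliers, using that the extra constraint $t \ge 0$ is the only structural difference and that at optimality it behaves consistently with the substitution $t = \sqrt{x'Qx}$.

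One technical point that needs care rather than being a deep obstacle: the existence of an optimal solution. If \CO \ is unbounded or its infimum is not attained, the ``optimal $x^*$'' language must be replaced by an $\varepsilon$-argument, or one must invoke that $X$ is a polyhedron and the objective is bounded below on $X$ whenever \CO \ has a finite value (coercivity is not automatic, so attainment should be argued, e.g.\ via the fact that a convex function bounded below on a polyhedron over which it has a finite infimum need not attain it — so likely the cleanest route is to carry the argument through infima and $\varepsilon$-optimal solutions, then note attainment separately if needed). I expect this attainment/boundedness bookkeeping to be the main place where the proof requires attention; the core inequality $\frac{a}{t}+t \ge 2\sqrt a$ driving both directions is elementary. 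I would therefore structure the proof as: (i) the AM--GM lemma for the $t$-subproblem, (ii) $\zeta \le \min(\CO)$, (iii) $\zeta \ge \min(\CO)$ via (i), (iv) the primal correspondence of optimizers including $t^* = \sqrt{(x^*)'Qx^*}$, and (v) the dual correspondence via KKT.
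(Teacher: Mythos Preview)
Your proposal is correct and takes a genuinely different route from the paper. The paper proves equivalence by writing the KKT conditions of \PO\ (for $t>0$) and observing that condition~\eqref{eq:KKT2} forces $t=\sqrt{x'Qx}$, after which the remaining KKT conditions collapse to those of \CO; the case $t=0$ is handled separately by noting it corresponds to \CO\ restricted to $x'Qx=0$. Your argument instead bypasses KKT for the value equivalence, using the elementary AM--GM inequality $\frac{\Omega}{2}\frac{x'Qx}{t}+\frac{\Omega}{2}t \ge \Omega\sqrt{x'Qx}$ to show directly that every feasible $(x,t)$ for \PO\ has objective at least the \CO\ objective at $x$, giving $\min(\CO)\le\zeta$; combined with the reverse inequality already noted, this establishes equality of optimal values and the primal correspondence without differentiability considerations. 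The paper's KKT route has the advantage that the primal--dual correspondence falls out simultaneously (and this is what the paper actually needs later, in Proposition~\ref{prop:convergence}), whereas you recover the dual correspondence only at step~(v) by reverting to KKT matching anyway. Your attainment caveat is well taken and more careful than the paper, which tacitly assumes optimal solutions exist; note, however, that the paper later simply assumes \PO\ has an optimal solution at the start of Section~\ref{sec:algorithms}, so this bookkeeping is ultimately sidestepped.
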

\begin{proof}
If $t >0$, the objective function of problem \PO \ is continuous and differentiable, and since the feasible region is a polyhedron and the problem is convex, its KKT points are equivalent to its optimal solutions. The KKT conditions of \PO \ are
\begin{align}
Ax&=b, \ x\geq 0, \ t\geq 0 \notag\\
\label{eq:KKT1}-c'-\frac{\Omega }{t}x'Q&=\lambda'A-\mu\\
\label{eq:KKT2}\frac{\Omega}{2t^2}x'Qx-\frac{\Omega}{2}&=0\\
\notag\mu&\geq 0\\
\notag\mu' x&=0,
\end{align}
where $\lambda$ and $\mu$ are the dual variables associated with constraints $Ax=b$ and $x\geq 0$, respectively. Note that $t>0$ and \eqref{eq:KKT2} imply that $t=\sqrt{x'Qx}$. Substituting  $t=\sqrt{x'Qx}$ in \eqref{eq:KKT1}, one arrives at the equivalent conditions
\begin{align}
Ax&=b, \ x\geq 0\notag\\
\label{eq:KKT0}-c'-\frac{\Omega}{\sqrt{x'Qx}}x'Q&=\lambda'A-\mu\\
t&=\sqrt{x'Qx}\label{eq:notInteresting}\\
 \mu&\geq 0\notag\\
\mu' x&=0\notag.
\end{align}
Ignoring the redundant variable $t$ and equation \eqref{eq:notInteresting}, we see that these are the KKT conditions of problem \CO. Therefore, any optimal primal-dual pair for \PO \ with $t>0$ is an optimal primal-dual pair for \CO. Similarly, we see that any optimal primal-dual pair of problem \CO \  with $x'Qx>0$ gives an optimal primal-dual pair of problem \PO \ by setting $t=\sqrt{x'Qx}$. In both cases, the objective values match.

On the other hand, if $t=0$, then \PO \ reduces to problem 
\begin{equation*}
\label{eq:CP0}
\min_{x\in \R^{n}}\left\{c'x:Ax=b, x\geq 0,x'Qx=0\right\},
\end{equation*}
which corresponds to \CO \ with $x'Qx = 0$, and hence they are equivalent.
\end{proof}

\ignore{
The objective function of problem \PO is not differentiable when $t=0$ (and the objective function of problem \CP is not differentiable when $x'Qx=0$), and therefore there may be optimal solutions to both problems that are not KKT points. Using the convention that infeasible solutions correspond to an objective value of $\infty$, we see that when $t=0$ problem \PO is equivalent to 
\begin{equation*}
\label{eq:CP0}
\min\left\{c'x: x \in X, \ x'Qx=0\right\}.
\end{equation*}
Therefore we see that the set of feasible solutions of problem \PO with $t=0$ is the same as the set of feasible solution of \CP with $x'Qx=0$, and that such solutions have the same objective value. Therefore, $(x,t)$ with $t=0$ is optimal for \PO if and only if $x'Qx=0$ and $x$ is optimal for \CP. It follows that, in all cases, the set of optimal solutions of \CP and \PO are essentially the same.
}

Proposition~\ref{prop:equivalence} indeed holds for more general problems; it is not necessary to have a polyhedral feasible set \cite{ADJ:mr-interdiction}. 
Since they are equivalent optimization problems, we can use \PO  \ to solve \CO. In particular, we exploit the fact that, for a fixed value of $t$, \PO \ reduces to a QP.

\section{Algorithms}
\label{sec:algorithms}

For simplicity, assume that $\PO$ has an optimal solution; hence, $X$ is nonempty and may be assumed to be bounded.
Consider the one-dimensional optimal value function 
\begin{equation}
\label{eq:oneDimensional}
g(t)=\min_{x\in X}c'x+\frac{\Omega}{2}h(x,t) +\frac{\Omega}{2}t \cdot
\end{equation}
As $X$ is nonempty and bounded, $g$ is real-valued and, by Proposition~\ref{prop:convexity}, it is convex. 
Throughout, $x(t)$ denotes an optimal solution to  \eqref{eq:oneDimensional}.

In this section we describe two algorithms for \PO \  that utilize a QP oracle. The first one is a coordinate descent approach,
whereas, the second one is an accelerated bisection search algorithm 
on the function $g$. 
Finally, 
we discuss how to exploit the warm starts with the simplex method to solve convex as well as discrete cases.

\subsection{Coordinate descent algorithm}
\label{sec:coordinate}
Algorithm~\ref{alg:coordinateDescent} successively optimizes over $x$ for a fixed value of $t$, and then optimizes over $t$ for a fixed value of $x$. Observe that the optimization problem in line~\ref{line:QP} over $x$ is a QP, and the optimization in line~\ref{line:closedForm} over $t$ has a closed form solution: by simply setting the derivative to zero, we find that $t_{i+1}=\sqrt{{x_{i+1}}'Qx_{i+1}}$.

\begin{algorithm}[h]
\caption{Coordinate descent.}
\label{alg:coordinateDescent}
\begin{algorithmic}[1]
\renewcommand{\algorithmicrequire}{\textbf{Input:}}
\renewcommand{\algorithmicensure}{\textbf{Output:}}
\Require $X \text{ polyhedron; }Q\text{ psd matrix; }c\text{ cost vector; } \Omega>0$
\Ensure Optimal solution $x^*$

\State \textbf{Initialize }$t_0 > 0$ \label{line:initt0} \Comment{e.g. $t_0=1$}
\State $i\leftarrow 0$ \Comment{iteration counter}
\Repeat
\State $x_{i+1}\leftarrow \argmin\limits_{x\in X}\left\{c'x+\frac{\Omega}{2t_i}x'Qx+\frac{\Omega}{2}t_{i}\right\}$\Comment{solve QP}\label{line:QP}
\State $t_{i+1}\leftarrow \argmin\limits_{t\geq 0}\left\{c'x_{i+1}+\frac{\Omega}{2t}{x_{i+1}}'Qx_{i+1}+\frac{\Omega}{2}t\right\}$\Comment{$t_{i+1}=\sqrt{{x_{i+1}}'Qx_{i+1}}$}\label{line:closedForm}
\State $i\leftarrow i+1$
\Until stopping condition is met \label{line:stoppingCriterion}
\State \Return $x_i$
\end{algorithmic}
\end{algorithm}

First observe that the sequence of objective values $\left\{c'x_i+\frac{\Omega}{2t_i}x_i'Qx_i+\frac{\Omega}{2}t_{i}\right\}_{i\in \N}$ is non-increasing. Moreover, the dual feasibility KKT conditions for the QPs in line \ref{line:QP} are of the form
\begin{equation}
\label{eq:QPKKT}
-c'-\frac{\Omega}{t_i}{x_{i+1}}'Q=\lambda'A-\mu.
\end{equation}
Let $\|\cdot\|$ be a norm and suppose that the QP oracle finds feasible primal-dual pairs with $\epsilon>0$ tolerance with respect to $\|\cdot\|$. In particular $x_{i+1}$ in line \ref{line:QP} violates \eqref{eq:QPKKT} by at most $\epsilon$, i.e.,
\begin{equation*}
\left\|-c'-\frac{\Omega}{t_i}{x_{i+1}}'Q-\lambda'A+\mu\right\|\leq \epsilon.
\end{equation*}
Proposition \ref{prop:convergence} below states that, at each iteration of Algorithm~\ref{alg:coordinateDescent}, we can bound the violation of the dual feasibility condition \eqref{eq:KKT0} corresponding to the original problem \CO. The bound depends only on the precision of the QP oracle $\epsilon$, the relative change of $t$ in the last iteration $\frac{\Delta_i}{t_i}$, where $\Delta_i=t_{i+1}-t_i$, and the gradient of the function $f(x)= \Omega \sqrt{x'Qx}$ evaluated at the new point $x_{i+1}$.

\begin{proposition}[\textit{Dual feasibility bound}]
\label{prop:convergence}
A pair $(x_{i+1},t_{i+1})$ in Algorithm~\ref{alg:coordinateDescent} satisfies 
$$\left\|-c'-\Omega\frac{x_{i+1}'Q}{\sqrt{{x_{i+1}}'Qx_{i+1}}}-\lambda'A+\mu\right\| \leq \epsilon+\frac{\left|\Delta_i\right|}{t_i}\cdot 
 \left\| \nabla f(x_{i+1})\right\|$$
\end{proposition}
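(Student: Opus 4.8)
The plan is to bound the perturbed dual-feasibility residual of \CO\ by the residual that the QP oracle in line~\ref{line:QP} already controls, using a single add-and-subtract step followed by the triangle inequality. The point is that the oracle guarantee (\ref{eq:QPKKT}) is stated with $t_i$ — the value frozen when the QP was solved — whereas condition (\ref{eq:KKT0}) for \CO\ involves $\sqrt{x_{i+1}'Qx_{i+1}}$; the discrepancy between these two is exactly the correction term.

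Concretely, I would add and subtract $\frac{\Omega}{t_i}x_{i+1}'Q$ inside the norm and write the vector $-c'-\Omega x_{i+1}'Q/\sqrt{x_{i+1}'Qx_{i+1}}-\lambda'A+\mu$ as the sum of
\[
\Big(-c'-\tfrac{\Omega}{t_i}x_{i+1}'Q-\lambda'A+\mu\Big)\quad\text{and}\quad\Big(\tfrac{\Omega}{t_i}x_{i+1}'Q-\tfrac{\Omega}{\sqrt{x_{i+1}'Qx_{i+1}}}x_{i+1}'Q\Big).
\]
The first summand is precisely the quantity in the oracle's accuracy guarantee following (\ref{eq:QPKKT}), so its norm is at most $\epsilon$. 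For the second summand I would substitute the closed-form update $t_{i+1}=\sqrt{x_{i+1}'Qx_{i+1}}$ from line~\ref{line:closedForm}, obtaining $\Omega x_{i+1}'Q\big(\tfrac{1}{t_i}-\tfrac{1}{t_{i+1}}\big)=\Omega x_{i+1}'Q\,\tfrac{t_{i+1}-t_i}{t_i t_{i+1}}=\tfrac{\Delta_i}{t_i}\cdot\tfrac{\Omega x_{i+1}'Q}{t_{i+1}}$. Since $f(x)=\Omega\sqrt{x'Qx}$ has gradient $\nabla f(x)=\Omega Qx/\sqrt{x'Qx}$ wherever $x'Qx>0$, the factor $\Omega x_{i+1}'Q/t_{i+1}$ equals $\nabla f(x_{i+1})'$, so the norm of the second summand is $\tfrac{|\Delta_i|}{t_i}\,\|\nabla f(x_{i+1})\|$. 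The triangle inequality then gives the stated bound.

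I do not anticipate a real obstacle here: the argument reduces to a one-line algebraic identity together with the triangle inequality. The only points requiring care are (i) keeping straight that the oracle residual is expressed in terms of $t_i$ rather than $t_{i+1}$ — which is what produces the correction term — and (ii) the tacit nondegeneracy assumption $x_{i+1}'Qx_{i+1}>0$ needed for both $\nabla f(x_{i+1})$ and the expression in the statement to be well defined; the degenerate case $x_{i+1}'Qx_{i+1}=0$ is subsumed by the $t=0$ regime already handled in Proposition~\ref{prop:equivalence}.
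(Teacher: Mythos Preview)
Your proposal is correct and follows essentially the same route as the paper: add and subtract $\frac{\Omega}{t_i}x_{i+1}'Q$, bound the first piece by the QP oracle tolerance $\epsilon$, simplify the second piece via $\frac{1}{t_i}-\frac{1}{t_{i+1}}=\frac{\Delta_i}{t_i t_{i+1}}$, and identify $\Omega x_{i+1}'Q/t_{i+1}$ with $\nabla f(x_{i+1})'$. Your remark on the implicit assumption $x_{i+1}'Qx_{i+1}>0$ is a useful clarification that the paper leaves tacit.
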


\begin{proof}
\begin{align*}
&\left\|-c'-\Omega\frac{ {x_{i+1}}'Q}{\sqrt{{x_{i+1}}'Qx_{i+1}}}-\lambda'A+\mu\right\|\\
=&\left\|-c'-\Omega\frac{{x_{i+1}}'Q}{t_i+\Delta_i}-\lambda'A+\mu\right\|\\
=&\left\|-c'-\Omega\frac{{x_{i+1}}'Q}{t_i}-\Omega {x_{i+1}}'Q\left(\frac{1}{t_i+\Delta_i}-\frac{1}{t_i}\right)-\lambda'A+\mu\right\|\\
=&\left\|-c'-\Omega\frac{{x_{i+1}}'Q}{t_i}-\lambda'A+\mu+\Omega \left(\frac{\Delta_i}{t_i\cdot t_{i+1}}\right) {x_{i+1}}'Q \right\|  \\
\leq& \epsilon +\left\| \Omega \frac{\Delta_i}{t_i} \cdot \frac{{x_{i+1}}'Q}{t_{i+1}}\right\|=\epsilon+ \Omega \frac{\left|\Delta_i\right|}{t_i}\cdot  \left\|  \frac{{x_{i+1}}'Q}{\sqrt{{x_{i+1}}'Qx_{i+1}}}\right\|.
\end{align*}
\end{proof}

Let $t^*$ be a minimizer of $g$ on $\R_+$. 
We now show that the sequence of values of $t$ produced by Algorithm~\ref{alg:coordinateDescent}, 
$\left\{t_i\right\}_{i\in \N}$, is monotone and bounded by $t^*$.
\begin{proposition}[\textit{Monotonicity}] 
\label{prop:monotonicity}
If $t_i\leq t^*$, then $t_{i+1}=\sqrt{{x_{i+1}}'Qx_{i+1}}$ satisfies $t_i\leq t_{i+1}\leq t^*$. Similarly, if $t_i\geq t^*$, then $t_i\geq t_{i+1}\geq t^*$.
\end{proposition}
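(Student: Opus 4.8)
The plan is to control the two inequalities separately, using the univariate functions obtained by freezing one coordinate at a time. Write $F(x,t)=c'x+\frac{\Omega}{2}h(x,t)+\frac{\Omega}{2}t$, so that $g(t)=\min_{x\in X}F(x,t)$, and introduce the ``frozen-$x$'' function $\psi_i(t):=F(x_{i+1},t)=c'x_{i+1}+\frac{\Omega}{2t}x_{i+1}'Qx_{i+1}+\frac{\Omega}{2}t$. The two facts I would build on are: (a) because $x_{i+1}$ solves the inner QP at $t_i$ (line~\ref{line:QP}), $\psi_i$ majorizes $g$, i.e.\ $g(t)\le\psi_i(t)$ for all $t\ge0$, with equality at $t=t_i$; and (b) $\psi_i$ has the elementary convex shape $\mathrm{const}+a/t+\frac{\Omega}{2}t$ with $a=\frac{\Omega}{2}x_{i+1}'Qx_{i+1}\ge0$, hence is strictly decreasing on $(0,t_{i+1})$ and strictly increasing on $(t_{i+1},\infty)$, its unique minimizer over $t\ge0$ being precisely $t_{i+1}=\sqrt{x_{i+1}'Qx_{i+1}}$ (line~\ref{line:closedForm}). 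I would also use that $g$ is convex (Proposition~\ref{prop:convexity}), so it is nonincreasing on $[0,t^*]$ and nondecreasing on $[t^*,\infty)$.

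For the ``no backward overshoot'' half — that $t_i\le t_{i+1}$ whenever $t_i\le t^*$ — I would argue by contradiction. If $t_{i+1}<t_i$, pick any $t$ strictly between them (or any $t\in(0,t_i)$ when $t_{i+1}=0$). By strict monotonicity of $\psi_i$ on that range, $\psi_i(t)<\psi_i(t_i)=g(t_i)$; on the other hand $0\le t<t_i\le t^*$ forces $g(t)\ge g(t_i)$, and $g(t)\le\psi_i(t)$ by (a). Chaining gives $g(t_i)\le g(t)\le\psi_i(t)<g(t_i)$, a contradiction. The case $t_i\ge t^*$ is handled symmetrically: if $t_{i+1}>t_i$, a point $t\in(t_i,t_{i+1})$ lies where $\psi_i$ is strictly decreasing while $g$ is nondecreasing, and the same sandwich fails, so $t_{i+1}\le t_i$.

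For the ``boundedness by $t^*$'' half I would bring in two small lemmas. First, $s\mapsto x(s)'Qx(s)$ is nondecreasing on $(0,\infty)$: adding the inner-QP optimality inequalities of $x(s_1)$ at $s_1$ and of $x(s_2)$ at $s_2$ collapses to $\frac{\Omega}{2}\big(x(s_1)'Qx(s_1)-x(s_2)'Qx(s_2)\big)\big(\tfrac1{s_1}-\tfrac1{s_2}\big)\le0$, which gives the claim for $s_1\le s_2$. Second, $t^*$ is a ``fixed point'': $x(t^*)'Qx(t^*)=(t^*)^2$; this follows by running observation (a) with $x(t^*)$ in place of $x_{i+1}$, so that $F(x(t^*),\cdot)$ is a univariate majorant of $g$ touching it at $t^*$, and since $t^*$ minimizes $g$ it must also minimize that majorant, whose minimizer is $\sqrt{x(t^*)'Qx(t^*)}$. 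Combining, when $t_i\le t^*$ we get $t_{i+1}^2=x_{i+1}'Qx_{i+1}=x(t_i)'Qx(t_i)\le x(t^*)'Qx(t^*)=(t^*)^2$, i.e.\ $t_{i+1}\le t^*$; when $t_i\ge t^*$ the same two lemmas give the reverse inequality of quadratic forms and hence $t_{i+1}\ge t^*$.

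I expect the main obstacle to be bookkeeping around the degenerate regime rather than anything deep: when $x_{i+1}'Qx_{i+1}=0$ (so $t_{i+1}=0$) or when $t^*=0$, the function $h$ is governed by its piecewise rule and the ``set the derivative to zero'' shortcut is unavailable, so I would check separately that in each such subcase the relevant univariate majorant degenerates to an affine, strictly increasing function of $t$ minimized at $t=0$, after which the same sandwich and fixed-point arguments go through. I would also record, for cleanliness, that non-uniqueness of the inner-QP minimizer is harmless: any two optimizers of $\min_{x\in X}F(x,s)$ form a face of the feasible set on which the convex quadratic is constant, so (using $Q\succeq0$) they share the same value of $x'Qx$, and thus $t_{i+1}$ and all the quantities above are well defined.
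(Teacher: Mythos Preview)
Your proposal is correct and largely parallels the paper's argument. For the bound $t_{i+1}\le t^*$ (resp.\ $\ge t^*$) you use precisely the two ingredients the paper uses: monotonicity of $s\mapsto x(s)'Qx(s)$ (the paper phrases this informally as ``larger quadratic coefficient $\Rightarrow$ smaller optimal quadratic value'') and the fixed-point identity $x(t^*)'Qx(t^*)=(t^*)^2$, which the paper invokes without comment. For the other half, $t_i\le t_{i+1}$, the paper takes a slightly shorter route than you do: having already established $t_{i+1}\le t^*$, it simply notes that coordinate descent gives $g(t_{i+1})\le g(t_i)$, and since both $t_i$ and $t_{i+1}$ lie on the nonincreasing side of the convex function $g$, this forces $t_i\le t_{i+1}$. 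Your majorant--sandwich argument with $\psi_i$ is a valid alternative that does not rely on first knowing $t_{i+1}\le t^*$; it is a little more work but is self-contained and, unlike the paper, handles the degenerate cases ($t_{i+1}=0$, non-uniqueness of $x(s)$) explicitly.
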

\begin{proof}

If $t_i\leq t^*$, then $\frac{\Omega}{2t_i}\geq \frac{\Omega}{2t^*}$. It follows that $x(t_{i+1})$ is a minimizer of an optimization problem with a larger coefficient for the quadratic term than $x(t^*)$, and therefore ${{x_{i+1}}'Qx_{i+1}}=t_{i+1}^2\leq {t^*}^2= {x^*}'Qx^*$, and $t_{i+1}\leq t^*$. Moreover, the inequality $t_i\leq t_{i+1}$ follows from the convexity of the one-dimensional function $g$ and
 the fact that function $g$ is minimized at $t^*$, and that $g(t_{i+1})\leq g(t_i)$.
The case $t_i\geq t^*$ is similar.
 \end{proof}

Since the sequence $\left\{t_i\right\}_{i\in \N}$ is bounded and monotone, it converges to a supremum or infimum. Thus  $\left\{t_i\right\}_{i\in \N}$ is a Cauchy sequence, and
$\lim\limits_{i \to \infty} \Delta_i = 0$. Corollaries \ref{cor:KKTConvergence} and \ref{cor:0Convergence} below state that Algorithm~\ref{alg:coordinateDescent} converges to an optimal solution. The cases where there exists a KKT point for \PO \ (i.e., there exists an optimal solution with $t^*>0$) and where there are no KKT points are handled separately.

\begin{corollary}[Convergence to a KKT point]
\label{cor:KKTConvergence}
If \PO \ has a KKT point, then Algorithm~\ref{alg:coordinateDescent} converges to a KKT point.
\end{corollary}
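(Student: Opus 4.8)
The plan is to combine the three preceding results—the dual feasibility bound (Proposition~\ref{prop:convergence}), the monotonicity of $\{t_i\}$ (Proposition~\ref{prop:monotonicity}), and the observation that $\{t_i\}$ is Cauchy so that $\Delta_i\to 0$—with a boundedness argument on the gradient term $\|\nabla f(x_{i+1})\|$. Assuming $\PO$ has a KKT point means there is an optimal solution with $t^*>0$. By monotonicity, starting from $t_0>0$ the sequence $\{t_i\}$ stays in a closed interval bounded away from $0$: if $t_0\le t^*$ then $t_i\in[t_0,t^*]$ for all $i$, and if $t_0\ge t^*$ then $t_i\in[t^*,t_0]$, and in the latter case I must separately argue $t^*>0$ forces the limit to be positive (which it is, since the limit lies in $[t^*,t_0]$). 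So in all cases $t_i\ge \underline{t}:=\min\{t_0,t^*\}>0$.

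Next I would bound $\|\nabla f(x_{i+1})\|$ uniformly. Since $f(x)=\Omega\sqrt{x'Qx}$, we have $\nabla f(x)'=\Omega\, x'Q/\sqrt{x'Qx}$ wherever $x'Qx>0$, and $\|\nabla f(x)\|$ is bounded on the bounded set $X$ (the subdifferential of the convex function $f$ is bounded on bounded sets; concretely $\|x'Q/\sqrt{x'Qx}\|\le \|Q^{1/2}\|\,\|Q^{1/2}x\|/\|Q^{1/2}x\| \le \|Q^{1/2}\|$ in the appropriate norm, uniformly over $x$ with $x'Qx>0$). Call this bound $M$. Then Proposition~\ref{prop:convergence} gives, for every $i$,
\begin{equation*}
\left\|-c'-\Omega\frac{x_{i+1}'Q}{\sqrt{x_{i+1}'Qx_{i+1}}}-\lambda_i'A+\mu_i\right\|\le \epsilon+\frac{|\Delta_i|}{\underline{t}}\,M,
\end{equation*}
where $(\lambda_i,\mu_i)$ is the dual pair returned by the QP oracle at iteration $i$, together with $\mu_i\ge 0$ and $\mu_i'x_{i+1}=0$ (the latter two hold exactly, or within the oracle tolerance, for the QP and carry over directly since these conditions do not involve $t$). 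Letting $i\to\infty$ and using $\Delta_i\to 0$, the right-hand side tends to $\epsilon$, so in the limit the accumulation point $(x^*,t^*)$ of $\{(x_i,t_i)\}$ (extract a convergent subsequence of $\{x_i\}$ using compactness of $X$; $t_i\to t^*>0$ already) satisfies the dual feasibility condition \eqref{eq:KKT0} within $\epsilon$, together with primal feasibility, $\mu^*\ge 0$, and complementary slackness. Hence the limit point is a KKT point of \PO\ (exactly, if $\epsilon=0$; within oracle tolerance otherwise), which by the argument in the proof of Proposition~\ref{prop:equivalence} is an optimal solution.

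The main obstacle is the bookkeeping around the QP oracle's tolerance $\epsilon$: one has to be careful to state the conclusion as ``converges to a point satisfying the KKT conditions up to the oracle precision $\epsilon$'' (or take $\epsilon\to 0$), and to verify that the complementary-slackness and dual-sign conditions pass to the limit—this uses continuity and the fact that a convergent subsequence of the $\mu_i$ exists, which in turn requires knowing the dual iterates stay bounded (guaranteed if $A$ has full row rank, or can be assumed without loss of generality). A secondary subtlety is handling the boundary behavior: we need $t_i$ bounded away from zero so that the $1/t_i$ and $1/t_{i+1}$ factors in Proposition~\ref{prop:convergence} do not blow up, and this is exactly what the hypothesis ``$\PO$ has a KKT point'' (equivalently $t^*>0$) buys us via Proposition~\ref{prop:monotonicity}; the complementary case $t^*=0$ is deferred to Corollary~\ref{cor:0Convergence}.
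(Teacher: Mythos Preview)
Your proof is correct and follows essentially the same approach as the paper: use Proposition~\ref{prop:monotonicity} together with the hypothesis that a KKT point (hence an optimal $t^*>0$) exists to keep $t_i$ bounded away from zero, then invoke Proposition~\ref{prop:convergence} with $\Delta_i\to 0$. The paper organizes the argument into three cases depending on where $t_0$ lies relative to the optimal interval $[t_\ell,t_u]$, whereas you collapse this into the single lower bound $t_i\ge\min\{t_0,t^*\}$ and supply an explicit uniform estimate $\|\nabla f\|\le\Omega\|Q^{1/2}\|$; your additional bookkeeping around the oracle tolerance $\epsilon$ and the passage to accumulation points is in fact more careful than the paper's somewhat informal conclusion.
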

\begin{proof}
By convexity, the set of optimal solutions to \eqref{eq:oneDimensional} is an interval, $[t_\ell,t_u]$. Since by assumption there exists a KKT point, we have that $t_u>0$. The proof is by cases, depending on the value of $t_0$ in line~\ref{line:initt0} of Algorithm~\ref{alg:coordinateDescent}.
\begin{description}
\item [Case $t_\ell\leq t_0\leq t_u$] Since $t_0$ is optimal, we have by Proposition~\ref{prop:monotonicity} that $t_1=t_0$. Since $\Delta_0=0$ and $t_0=\sqrt{x_{i+1}'Qx_{i+1}}>0$, we have that $\left\| \nabla f(x_{i+1})\right\|<\infty$ in Proposition~\ref{prop:convergence}, and $\frac{\left|\Delta_i\right|}{t_i}\cdot 
 \left\| \nabla f(x_{i+1})\right\|=0$.
\item [Case $t_0< t_\ell$]We have by Proposition~\ref{prop:monotonicity} than for all $i\in \N$, $t_i=\sqrt{x_i'Qx_i}\geq t_0>0$. Therefore, there exists a number $M$ such that $\frac{1}{t_i}\left\| \nabla f(x_{i+1})\right\|<M$ for all $i\in \N$, and we find that $\frac{\left|\Delta_i\right|}{t_i}\cdot 
 \left\| \nabla f(x_{i+1})\right\|\xrightarrow{\Delta_i\to 0} 0$.
\item [Case $t_0> t_u$]We have by Proposition~\ref{prop:monotonicity} than for all $i\in \N$, $t_i=\sqrt{x_i'Qx_i}\geq t_u>0$. Therefore, there exists a number $M$ such that $\frac{1}{t_i}\left\| \nabla f(x_{i+1})\right\|<M$ for all $i\in \N$, and we find that $\frac{\left|\Delta_i\right|}{t_i}\cdot 
 \left\| \nabla f(x_{i+1})\right\|\xrightarrow{\Delta_i\to 0} 0$.
\end{description}
Therefore, in all cases, Algorithm~\ref{alg:coordinateDescent} convergences to a KKT point by Proposition~\ref{prop:convergence}.
\end{proof}

\begin{corollary}[Convergence to $0$]
\label{cor:0Convergence}
If $t^*=0$ is the unique optimal solution to $\min \{g(t): t \in \R_+\}$, then for any $\xi>0$ Algorithm~\ref{alg:coordinateDescent} finds a solution $(\bar{x},\bar{t})$, where $\bar{t}<\xi$ and $\bar{x}\in \argmin\left\{c'x:\sqrt{x'Qx}=\bar{t}, x\in X\right\}$.
\end{corollary}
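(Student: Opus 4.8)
The plan is to show that the sequence $\{t_i\}$ generated by Algorithm~\ref{alg:coordinateDescent} decreases to $0$; the $\argmin$ part of the statement is then essentially automatic. Write $\phi(x,t)=c'x+\frac{\Omega}{2}h(x,t)+\frac{\Omega}{2}t$, so $g(t)=\min_{x\in X}\phi(x,t)$, line~\ref{line:QP} asks for a minimizer $x_{i+1}$ of $\phi(\cdot,t_i)$ over $X$, and $\phi$ is jointly continuous (indeed smooth) on $\{(x,t):t>0\}$. First I would note that the level-set property needs no work: for any $i\ge1$, $x_i$ minimizes $c'x+\frac{\Omega}{2t_{i-1}}x'Qx$ over $X$ and $t_i^2=x_i'Qx_i$, so any $\tilde x\in X$ with $\sqrt{\tilde x'Q\tilde x}=t_i$ satisfies $\tilde x'Q\tilde x=x_i'Qx_i$ and hence $c'\tilde x\ge c'x_i$; thus $x_i\in\argmin\{c'x:\sqrt{x'Qx}=t_i,\ x\in X\}$ at every iterate. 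So it suffices to produce an iterate with $t_i<\xi$, and for that it suffices to prove $t_i\to0$ (if $t_i=0$ at some step we are already done).

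Next I would apply Proposition~\ref{prop:monotonicity} with $t^*=0$: since $t_0>0=t^*$, the sequence $\{t_i\}$ is non-increasing and bounded below by $0$, hence converges to some $\bar t\ge0$. Suppose for contradiction that $\bar t>0$. By compactness of $X$, extract a subsequence $x_{i_k}\to\bar x\in X$ with $i_k\ge1$; then $t_{i_k-1}\to\bar t$ and $t_{i_k}\to\bar t$. From $t_{i_k}^2=x_{i_k}'Qx_{i_k}\to\bar x'Q\bar x$ together with $t_{i_k}^2\to\bar t^2$ we obtain $\bar x'Q\bar x=\bar t^2>0$, so $\bar t=\sqrt{\bar x'Q\bar x}=\argmin_{t\ge0}\phi(\bar x,t)$ by the closed form in line~\ref{line:closedForm}. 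And from $\phi(x_{i_k},t_{i_k-1})\le\phi(x,t_{i_k-1})$ for every $x\in X$, letting $k\to\infty$ and using continuity of $\phi$ near points with $t>0$, we get $\phi(\bar x,\bar t)\le\phi(x,\bar t)$ for every $x\in X$, i.e.\ $\bar x$ minimizes $\phi(\cdot,\bar t)$ over $X$. Hence $(\bar x,\bar t)$ is a fixed point of both coordinate minimizations.

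To finish, I would turn this fixed point into a KKT point of \PO. Since $\bar x$ solves the convex QP $\min_{x\in X}\{c'x+\frac{\Omega}{2\bar t}x'Qx\}$ over the polyhedron $X$, there exist $\lambda$ and $\mu\ge0$ with $A\bar x=b$, $\bar x\ge0$, $\mu'\bar x=0$, and $-c'-\frac{\Omega}{\bar t}\bar x'Q=\lambda'A-\mu$; and $\bar x'Q\bar x=\bar t^2$ with $\bar t>0$ is precisely \eqref{eq:KKT2}. Thus $(\bar x,\bar t)$ satisfies the KKT system \eqref{eq:KKT1}--\eqref{eq:KKT2} of \PO\ with $\bar t>0$, so by the argument in the proof of Proposition~\ref{prop:equivalence} it is an optimal solution of \PO. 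Then $g(\bar t)=\phi(\bar x,\bar t)=\zeta=g(0)$, so $\bar t$ is a minimizer of $g$, contradicting the uniqueness of $t^*=0$ since $\bar t>0$. Hence $\bar t=0$; together with the first paragraph this proves the corollary.

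The main obstacle is the limiting argument in the second paragraph: one must check that the subsequential limit $(\bar x,\bar t)$ really is a fixed point of \emph{both} coordinate updates, which relies on compactness of $X$ and on the joint continuity of $\phi$ away from $t=0$. The hypothesis $\bar t>0$ is exactly what keeps $(\bar x,\bar t)$ off the nonsmooth slice $\{t=0\}$, so that the KKT characterization of \PO\ from the proof of Proposition~\ref{prop:equivalence} applies; the remaining ingredients (monotonicity, the level-set property, and passing from a coordinate-wise minimizer to a KKT point of \PO) are routine.
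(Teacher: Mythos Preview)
Your proof is correct and follows the same overall contradiction strategy as the paper: assume $t_i\not\to 0$, produce an optimal point of \PO\ with $t>0$, and contradict the uniqueness of $t^*=0$. The difference is in how the contradiction step is executed. The paper dispatches it in one line by invoking Corollary~\ref{cor:KKTConvergence}, which in turn rests on the quantitative dual-feasibility bound of Proposition~\ref{prop:convergence}: since $\Delta_i\to 0$ and the gradient norm stays bounded when the $t_i$ are bounded away from zero, the KKT residual vanishes in the limit. You instead argue directly via compactness of $X$ and joint continuity of $\phi$ on $\{t>0\}$ to show that any subsequential limit $(\bar x,\bar t)$ is a fixed point of both coordinate updates, hence a KKT point of \PO. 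Your route is more self-contained (it does not need Proposition~\ref{prop:convergence}) but relies explicitly on compactness of $X$, which the paper's argument uses only indirectly through the standing boundedness assumption; the paper's route, on the other hand, recycles machinery already established for Corollary~\ref{cor:KKTConvergence}. Both paths also handle the $\argmin$ assertion the same way you do in your first paragraph, noting that the QP optimality of $x_{i+1}$ immediately gives $x_{i+1}\in\argmin\{c'x:\sqrt{x'Qx}=t_{i+1},\ x\in X\}$.
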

\begin{proof}
The sequence $\left\{t_i\right\}_{i\in \N}$ converges to $0$ (otherwise, by Corollary~\ref{cor:KKTConvergence}, it would converge to a KKT point). Thus, $\lim_{i\to\infty}\sqrt{x_i'Qx_i}=0$ and all points obtained in line~\ref{line:QP} of Algorithm~\ref{alg:coordinateDescent} satisfy $x_{i+1}\in \argmin\left\{c'x:\sqrt{x'Qx}=t_{i+1}, x\in X\right\}$.
\end{proof}



\ignore{
\begin{remark}
From Proposition~\ref{prop:convergence} we see that optimal primal-dual pairs of  \CP correspond to the optimal primal-dual pairs of the QP \eqref{eq:oneDimensional} at $t^*$.
\end{remark}
}

We now discuss how to initialize and terminate Algorithm~\ref{alg:coordinateDescent}, corresponding to lines \ref{line:initt0} and \ref{line:stoppingCriterion}, respectively.
\subsubsection*{Initialization.}
 The algorithm may be initialized by an arbitrary $t_0 > 0$.
 Nevertheless, when a good initial guess on the value of $t^*$ is available, $t_0$ should be set to that value. 
 Moreover, observe that setting $t_0=\infty$ results in a fast computation of $x_1$ by solving an LP.

\subsubsection*{Stopping condition.}
Proposition~\ref{prop:convergence} suggests a good stopping condition for Algorithm~\ref{alg:coordinateDescent}. Given a desired dual feasibility tolerance of $\delta>\epsilon$, we can stop when $\epsilon + \frac{\left|\Delta_i\right|}{t_i}\cdot \left\| \nabla f(x_{i+1}) \right\|<\delta$. Alternatively, if 
$\exists k \text{ s.t. }  \max_{x \in X} \left\| \nabla f(x) \right\| \le k < \infty$, then the simpler $\left|\frac{\Delta_i}{t_i}\right|\leq \frac{\delta-\epsilon}{k}$ is another stopping condition. For instance,
a crude upper bound on $  \rev{\|}\nabla f(x) \rev{\|}= \Omega\left\| \frac{{x}'Q}{\sqrt{{x}'Qx}}\right\|$ can be found by maximizing/minimizing the numerator $x'Q$ over $X$ and minimizing $x'Qx$ over $X$. The latter minimization is guaranteed to have a nonzero optimal value if $0 \not \in X$ and $Q$ is positive definite.

\rev{
\begin{remark}
Observe that the stopping condition above may fail if $t^*=0$ is the unique optimal solution of $\min_{t\geq 0}g(t)$ (Corollary~\ref{cor:0Convergence}). This case happens only if $Q$ is positive semi-definite (but not positive definite), or if $x^*=0$ is the unique optimal solution of \CO. Such situations rarely arise in practice and can often be ruled out \emph{a priori}. Nonetheless, stopping Algorithm~\ref{alg:coordinateDescent} also when $t_i\leq \xi$ for some small $\xi>0$ ensures that the algorithm terminates (as specified in Corollary~\ref{cor:0Convergence}) even when $t^*=0$. 
\end{remark}
}

\ignore{
\begin{remark}
We provide some intuition for Proposition~\ref{prop:convergence}. Recall that $t_i=\sqrt{x_i'Qx}$, and so we can write (with an abuse of notation) that the gradient of $t$ at $x$ is $\frac{\partial t}{\partial x}(x_i)=\frac{{x_i}'Q}{\sqrt{{x_i}'Qx_i}}$. A natural estimator of the future change of $t$ is the rate of change of $t$ at the current point, given by $\Omega\frac{\partial t}{\partial x}(x_{i+1})$, times the relative change in the previous iteration, $\frac{\Delta_i}{t_i}$. According to Proposition~\ref{prop:convergence}, the natural estimator gives a bound on the violation of KKT condition \eqref{eq:KKT0} at the current point.
\end{remark}
}

\subsection{Bisection algorithm}
\label{sec:bisection}

Algorithm~\ref{alg:bisection} is an accelerated bisection approach to solve \PO. The algorithm maintains lower and upper bounds, $t_{\min}$ and $t_{\max}$, on $t^*$ and, at each iteration, reduces the interval $[t_{\min}, t_{\max}]$ by at least half. The algorithm differs from the traditional bisection search algorithm in lines \ref{line:iBisection10}--\ref{line:iBisection3}, where it uses an acceleration step to reduce the interval by a larger amount: 
by Proposition~\ref{prop:monotonicity},
if $t_0\leq t_1$ (line \ref{line:iBisection10}), then $t_0\leq t_1\leq t^*$, and therefore $t_1$ is a higher lower bound on $t^*$ (line \ref{line:iBisection11}); similarly, if $t_0\geq t_1$, then $t_1$ is an lower upper bound on $t^*$ (lines \ref{line:iBisection20} and \ref{line:iBisection21}). Intuitively, the algorithm takes a ``coordinate descent" step as in Algorithm~\ref{alg:coordinateDescent} after each bisection step. Preliminary computations show that the acceleration step reduces
the number of steps as well as the overall solution time for the bisection algorithm by about 50\%.

\begin{algorithm}[h]
\caption{Accelerated bisection.}
\label{alg:bisection}
\begin{algorithmic}[1]
\renewcommand{\algorithmicrequire}{\textbf{Input:}}
\renewcommand{\algorithmicensure}{\textbf{Output:}}
\Require $X \text{ polyhedron; }Q\text{ psd matrix; }c\text{ cost vector; } \Omega>0$
\Ensure Optimal solution $x^*$
\State \textbf{Initialize }$t_{\min}$ and $t_{\max}$ \Comment{ensure $t_{\min}\leq t^* \leq t_{\max}$}\label{line:initTs}
\State $\hat{z}\leftarrow \infty$ \Comment{best objective value found}
\Repeat
\State $t_0\leftarrow \frac{t_{\min}+t_{\max}}{2}$
\State $x_0\leftarrow \argmin\limits_{x\in X}\left\{c'x+\frac{\Omega}{2t_0}x'Qx+\frac{\Omega}{2}t_{0}\right\}$\Comment{solve QP}\label{line:updateX}
\State $t_{1}\leftarrow \sqrt{{x_{0}}'Qx_{0}}$
\If{$t_0 \leq t_1$}\label{line:iBisection10} \Comment{accelerate bisection}
\State $t_{\min}\leftarrow t_1$\label{line:iBisection11}
\Else\label{line:iBisection20}
\State $t_{\max}\leftarrow t_1$\label{line:iBisection21}
\EndIf \label{line:iBisection3}
\If{$c'x_0+\Omega\sqrt{{x_0}'Qx_0}\leq \hat{z}$} \Comment{update the incumbent solution}
\State $\hat{z}\leftarrow c'x_0+\Omega\sqrt{{x_0}'Qx_0}$
\State $\hat{x}\leftarrow x_0$
\EndIf
\Until stopping condition is met \label{line:stoppingCriterion2}
\State \Return $\hat{x}$
\end{algorithmic}
\end{algorithm}

\subsubsection*{Initialization.}
In line~\ref{line:initTs}, $t_{\min}$ can be initialized to zero and $t_{\max}$ to ${x_{LP}}'Qx_{LP}$, where $x_{LP}$ is an optimal solution to the LP relaxation  
$\min_{x\in X}c'x$.

\subsubsection*{Stopping condition.}
There are different possibilities for the stopping criterion in line \ref{line:stoppingCriterion2}. Note that if we have numbers $t_m$ and $t_M$ such that $t_m \leq t^* \leq t_M$, then $c'x (t_M)+\Omega\sqrt{{x(t_m)}'Qx(t_m)}$ is a lower bound on the optimal objective value $c'x^*+\Omega\sqrt{{x^*}'Qx^*}$. Therefore, in line~\ref{line:updateX}, a lower bound $z_l$ on the objective function can be computed, and the algorithm can be stopped when the gap between $\hat{z}$ and $z_l$ is smaller than a given threshold. Alternatively, stopping when $\frac{\left|t_1-t_0\right|}{t_0}\cdot \Omega\left\| \frac{{x_{0}}'Q}{\sqrt{{x_{0}}'Qx_{0}}}\right\|<\delta-\epsilon$ provides a guarantee on the dual infeasibility as in Proposition~\ref{prop:convergence}. 

\subsection{Warm starts}
\label{sec:warmStarts} 
Although any QP solver can be used to run the coordinate descent and bisection algorithms described in Sections \ref{sec:coordinate} and \ref{sec:bisection}, simplex methods for QP are particularly effective 
as they allow warm starts for small changes in the model parameters in iterative applications. This is the main motivation for the QP based algorithms presented above.

\subsubsection{Warm starts with primal simplex for convex optimization}
\label{sec:warmStartPrimal}
All QPs solved in Algorithms~\ref{alg:coordinateDescent}--\ref{alg:bisection} have the same feasible region and only the objective function changes in each iteration. Therefore, an optimal basis for a QP is primal feasible for the next QP solved in the sequence, and can be used to warm start a primal simplex QP solver. 

\subsubsection{Warm starts with dual simplex for discrete optimization}
When solving discrete counterparts of \CO \ with a branch-and-bound algorithm
one is particularly interested in utilizing warm starts in solving convex relaxations at the nodes of the search tree. In a branch-and-bound algorithm, children nodes typically have a single additional bound constraint compared to the parent node.

For this purpose, it is also possible to warm start Algorithm~\ref{alg:coordinateDescent} from a dual feasible basis. 
Let $(x^*,t^*)$ be an optimal solution to \PO \  and $B^*$ be an optimal basis. Consider a new problem
\begin{equation}
\label{eq:dualFeasible}
\min \left\{c'x+\frac{\Omega}{2t}x'Qx+\frac{\Omega}{2}t: x \in \bar X, \ t \ge 0\right\},
\end{equation} 
where the feasible set $\bar{X}$ is obtained from $X$ by adding new constraints. 
Note that $B^*$ is a dual feasible basis for \eqref{eq:dualFeasible} when $t = t^*$. Therefore, 
Algorithm~\ref{alg:coordinateDescent} to solve problem \eqref{eq:dualFeasible} can be warm started 
by initializing $t_0=t^*$ and using $B^*$ as the initial basis to compute $x_1$ with a dual simplex algorithm. 
The subsequent QPs can be solved using the primal simplex algorithm as noted in Section~\ref{sec:warmStartPrimal}.

\ignore{
In typical branch-and-bound algorithms for MILPs and MIQPs, the optimal basis found at each node is then used to warm start the continuous solver in the children nodes. A child node typically has a single additional bound constraint. To extend the branch-and-bound algorithms to MICPs, it is sufficient to define the basis as the pair $(B^*,t^*)$ described in the previous paragraph, and use Algorithm~\ref{alg:coordinateDescent} as the continuous solver. 
}

\rev{
\subsection{Special cases} The simplex method is a general algorithm that can be used with any polyhedron $X$ and, as mentioned in Section~\ref{sec:warmStarts}, is well suited for solving the sequence of QPs. Nevertheless, for particular feasible regions, other specialized algorithms may be preferable. For instance, in the trivial unbounded case ($X=\R^n$), the QPs can be solved in closed form. Another, more interesting, case is the problem
\begin{equation}
\label{eq:sqrtLasso}
\min_{x\in \R^n}\sqrt{(x-y)'Q(x-y)}+\beta\|x\|_1,
\end{equation}
where $y\in \R^n$ is fixed, $\|\cdot\|_1$ denotes the $\ell_1$-norm and $\beta>0$ is a regularization parameter. Note that \eqref{eq:sqrtLasso} is a special case of \CO \ with the usual linearization of the $\ell_1$-norm. Problem \eqref{eq:sqrtLasso} arises in compressed sensing \citep{aybat2011first} and sparse linear regression \citep{belloni2011square}, and is solved fast using first-order methods \citep{nesterov2005smooth}. Note if Algorithm~\ref{alg:coordinateDescent} or \ref{alg:bisection} is used instead, then every QP subproblem $\min_{x\in \R^n}\frac{(x-y)'Q(x-y)}{2t}+\beta\|x\|_1$ corresponds to the well-studied Lasso problem \citep{tibshirani1996regression}, for which efficient special algorithms exist. In particular, problem \eqref{eq:sqrtLasso} can be solved with a \emph{single} call to an algorithm that computes the regularization path (i.e., solves the problem for all $\beta$), such as Least Angle Regression \citep{efron2004least}.
}

\ignore{
\subsection{Unbounded case}
\label{sec:unbounded}
In many cases it is possible to determine that problem \CP \ is bounded \textit{a priori} (e.g., $X$ is a polytope, or $c\geq 0$). We now discuss how to detect whether problem \CP is bounded or not when there is not a simple guarantee.

First, note that if the LP relaxation \eqref{eq:LPrelaxation} is bounded then problem \CP is bounded. Moreover, as Proposition~\ref{prop:unbounded} states, if any of the QPs is unbounded then problem \CP is unbounded. 

\begin{proposition}
\label{prop:unbounded}
If $g(t)=-\infty$ for any fixed $t\geq 0$, then problem \CP is unbounded.
\end{proposition}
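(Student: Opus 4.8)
The plan is to argue by contradiction through the perspective/projection relationship already established between \PO \ and \CO. Suppose that $g(\bar t) = -\infty$ for some fixed $\bar t \ge 0$, but that \CP \ (equivalently \PO, by Proposition~\ref{prop:equivalence}) is bounded below, say by a finite value $\beta$. The key observation is that for the fixed value $\bar t$, the inner problem defining $g(\bar t)$ is exactly the restriction of \PO \ to the slice $t = \bar t$; since $(x,\bar t)$ is feasible for \PO \ whenever $x \in X$ (note $\bar t \ge 0$), every feasible point of the problem defining $g(\bar t)$ yields a feasible point of \PO \ with the \emph{same} objective value $c'x + \frac{\Omega}{2}h(x,\bar t) + \frac{\Omega}{2}\bar t$. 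Hence $g(\bar t) \ge \zeta \ge \beta > -\infty$, contradicting $g(\bar t) = -\infty$.

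Carrying this out carefully requires handling the two cases in the definition of $h$. If $\bar t > 0$, then $h(x,\bar t) = x'Qx/\bar t$ is finite for every $x \in X$, so the problem defining $g(\bar t)$ is a genuine (finite-objective) QP over the polyhedron $X$, and the inequality $g(\bar t) \ge \zeta$ is immediate from feasibility of $(x,\bar t)$ in \PO. If $\bar t = 0$, then $h(x,0)$ is $0$ when $x'Qx = 0$ and $+\infty$ otherwise, so $g(0) = \min\{c'x : x \in X,\ x'Qx = 0\}$, which again is bounded below by $\zeta$ since any such $x$ gives the feasible point $(x,0)$ in \PO \ with matching objective. In either case $g(\bar t) \ge \zeta$, so if \PO \ is bounded then so is $g(\bar t)$; contrapositively, $g(\bar t) = -\infty$ forces \PO, and therefore \CP, to be unbounded.

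The only mild subtlety — and the one step worth stating explicitly rather than waving through — is the direction of the inequality $g(t) \ge \zeta$ versus the reduction ``$\PO$ reduces to a QP for fixed $t$'' used elsewhere in the paper. The point is simply that minimizing over the smaller feasible set $\{(x,t) \in X \times \R_+ : t = \bar t\}$ cannot produce a value below the minimum over all of $X \times \R_+$; no KKT or convexity machinery is needed, only monotonicity of $\inf$ under set inclusion. Thus the real content is bookkeeping with the extended-real-valued $h$, and the proof is short.
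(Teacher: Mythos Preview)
Your argument is correct and takes a different route from the paper. The paper works directly with sequences: it picks $\{x_i\}\subset X$ with $c'x_i + \tfrac{\Omega}{2t}x_i'Qx_i \to -\infty$ and then bounds the conic-quadratic objective $c'x_i + \Omega\sqrt{x_i'Qx_i}$ from above by $c'x_i + \max\bigl\{\tfrac{\Omega}{2t}x_i'Qx_i,\,2t\bigr\}$ via an elementary $\max\{a,b\}\ge\sqrt{ab}$-type bound, concluding that the \emph{same} sequence drives the \CP\ objective to $-\infty$. Your approach is more structural: $g(\bar t)$ is the value of \PO\ restricted to the slice $t=\bar t$, so $g(\bar t)\ge \zeta$ by set inclusion, and Proposition~\ref{prop:equivalence} identifies $\zeta$ with the optimal value of \CP. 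The paper's route is self-contained and constructive (it explicitly exhibits the unbounded sequence for \CP), while yours is cleaner bookkeeping once the \PO/\CP\ equivalence is in hand.

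One caveat worth flagging: your appeal to Proposition~\ref{prop:equivalence} for the direction ``\CP\ bounded $\Rightarrow$ \PO\ bounded'' is slightly delicate, since the proof of that proposition argues through KKT points and tacitly assumes optimal solutions exist --- precisely what is in question in the unbounded setting. The gap is easily closed: for $t>0$ the AM--GM inequality $\tfrac{\Omega}{2t}x'Qx + \tfrac{\Omega t}{2}\ge \Omega\sqrt{x'Qx}$ gives $\zeta\ge\inf_{x\in X}\bigl(c'x+\Omega\sqrt{x'Qx}\bigr)$ unconditionally. But note that at this point you are essentially invoking the same elementary inequality that drives the paper's direct argument.
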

\begin{proof}
If $g(t)=-\infty$, then there exists a sequence of feasible points $\left\{x_i\right\}_{i\in \N}$ such that 
\begin{align*}
&\lim_{i\to \infty }c'x_i+\frac{\Omega}{2t}{x_i}'Qx_i=-\infty\\
\implies&\lim_{i\to \infty }c'x_i+\max\left\{\frac{\Omega}{2t}{x_i}'Qx_i,2t\right\}=-\infty.
\end{align*}
Since $c'x+\Omega\sqrt{{x_i}'Qx_i}\leq c'x+\max\left\{\frac{\Omega}{2t}{x_i}'Qx_i,2t\right\}$, we have that the sequence $\left\{x_i\right\}_{i\in \N}$ is also a unbounded sequence for problem \CP.
\end{proof}

Unfortunately, as Example~\ref{ex:unbounded} shows, it is possible that $g(t)>-\infty$ for all $t$ and that problem \CP is unbounded. In this case we have that $\lim\limits_{t\to \infty}g(t)=-\infty$.
\begin{example}
\label{ex:unbounded}
Consider the one-dimensional unconstrained problem $$\min_{x\in \R} x+\Omega\left|x\right|,$$
which is unbounded for $\Omega<1$. In this case we have $$g(t)=\min_{x\in \R}\left(x+\frac{\Omega}{2t} x^2+\frac{\Omega}{2}t\right)=t\left(\frac{\Omega^2-1}{2\Omega}\right),$$ which is bounded for all $\Omega> 0$. Nevertheless, we see that when $\Omega<1$ we have that $\lim\limits_{t\to \infty}g(t)=-\infty$.
\end{example}

We now summarize a process for instances that may be unbounded. We first check for easy certificates of boundedness or unboundedness. In case we are unable to verify whether the problem is bounded or not, we run Algorithm~\ref{alg:coordinateDescent} until a feasible solution with a sufficiently low objective value is found.
\begin{description}
\item[Step 1] Solve the LP relaxation \eqref{eq:LPrelaxation}. If it is bounded, then problem \CO is bounded and can be solved using Algorithms\footnote{Note that Algorithm~\ref{alg:bisection} requires solving the LP in any case. Moreover, Algorithm~\ref{alg:coordinateDescent} can be warm started from the LP optimal solution.}~\ref{alg:coordinateDescent} or \ref{alg:bisection}. Otherwise go to Step 2.
\item[Step 2] Initialize $t$, and compute $g(t)$. If $g(t)=-\infty$, then problem \CP is unbounded. Otherwise go to Step 3.
\item[Step 3] Choose a lower bound $m$. Use Algorithm~\ref{alg:coordinateDescent} until convergence (in which case the solution found is optimal) or until a feasible solution is found such that the objective value is less than $m$. 
\end{description}
}

\section{Computational experiments}
\label{sec:computational}

In this section we report on computational experiments with solving convex \CO \ and its discrete counterpart \MICO \ with the algorithms described in Section~\ref{sec:algorithms}. The algorithms are implemented with CPLEX Java API. We use the simplex and barrier solvers of CPLEX version 12.6.2\rev{, as well as the barrier solver of MOSEK version 8.1.0} for the computational experiments. All experiments are conducted on a workstation with a 2.93GHz Intel\textregistered Core\textsuperscript{TM} i7 CPU and 8 GB main memory using a single thread. 

\subsection{Test problems} We test the algorithms on two types of data sets. For the first set the feasible region is described by a cardinality constraint and bounds, i.e., $X=\left\{x\in\R^{n}:\sum_{i=1}^n x_i= b,\; 
\0 \leq x \leq \1 \right\}$ with $b = n/5$\rev{; problems with a cardinality constraint are common in finance \citep{bienstock1996computational} and statistics \cite{bertsimas2016best}}. For the second data set the feasible region consists of the 
path polytope of an acyclic grid network\rev{; conic quadratic optimization over paths has been studied in \cite{Bertsimas2004,nikolova2006stochastic}, and similar substructures arise in more complex problems such as vehicle routing \cite{dinh2016exact}.} For discrete optimization problems we additionally enforce the binary restrictions $x\in \B^n$. 


\ignore{
\subsubsection{Feasible regions} We consider two classes of feasible regions:
\begin{description}
	\item[Cardinality instances] The feasible region consists of a single cardinality constraint and bound constraints, i.e. $$X=\left\{x\in\R^{n}:\sum_{i=1}^n x_i= b,\; 0\leq x_i\leq 1 \;\forall i=1,\ldots,n\right\}.$$ In the computational experiments, we set $b=n/5$.
	\item[Path instances] The feasible region consists of the path polytope in acyclic grid networks.
\end{description}

}

For both data sets the objective function $q(x) = c'x + \Omega \sqrt{x'Qx}$ is generated as follows:
 Given a rank parameter $r$ and density parameter $\alpha$, $Q$ is the sum of a low rank factor matrix and a full rank diagonal matrix; that is, $Q=F\Sigma F'+D$, where
\begin{itemize}
	\item $D$ is an $n\times n$ diagonal matrix with entries drawn from Uniform$(0,1)$.
	\item $\Sigma=HH'$ where $H$ is an $r\times r$ matrix with entries drawn from Uniform$(-1,1)$.
	\item $F$ is an $n\times r$ matrix in which each entry is $0$ with probability $1-\alpha$ and 
	drawn from Uniform$(-1,1)$ with probability $\alpha$. 
\end{itemize} 
\rev{Note that the construction of matrix $Q$ is consistent with factor models often used in finance. In particular, $F$ is the factor exposure matrix, $\Sigma$ is the factor covariance matrix and $D$ is the matrix of the residual variances.}
Each linear coefficient $c_i$ is drawn from Uniform$(-2\sqrt{Q_{ii}},0)$. 

\ignore{Therefore if the objective function is interpreted as the value-at-risk of normally distributed random variables, then we have that on average the expected return of each variable is proportional to its standard deviation (and risky variables have thus better expected returns).}

\subsection{Experiments with convex problems}
\label{sec:resultsContinuous}

In this section we present the computational results for convex instances. We compare the following algorithms:
\begin{description}
\item [ALG1] Algorithm~\ref{alg:coordinateDescent}.
\item [ALG2] Algorithm~\ref{alg:bisection}.
\item [BAR] CPLEX barrier algorithm (the default solver in CPLEX for convex conic quadratic problems). 
\rev{\item[MOS] MOSEK barrier algorithm.}
\end{description}
For algorithms ALG1 and ALG2 we use CPLEX primal simplex algorithm as the QP solver.\ignore{, and the stopping condition $\frac{\left|\Delta_i\right|}{t}\leq 10^{-5}$
unless specified otherwise.}

\ignore{
 Specifically, we present three sets of computational results. First in Section~\ref{sec:resultsContinuousQ} we study the effects of changing the $Q$ matrix, and we are primarily concerned with comparing between the simplex-based algorithms. Then in Section~\ref{sec:resultsContinuousDimension} we study the effects of changing the dimension (for a fixed structure of the $Q$ matrix), and we are primarily concerned with comparing the performance of the barrier algorithm and the simplex-based algorithms. Finally in Section~\ref{sec:resultsContinuousTolerance} we study the effects of changing the tolerance (for a fixed dimension and structure of the $Q$ matrix), and compare the barrier algorithm with a simplex-based algorithm.

}

\subsubsection*{Optimality tolerance}

As the speed of the interior point methods crucially depends on the chosen optimality tolerance, it is prudent to first compare the speed vs the quality of the solutions for the algorithms tested. Here we study the impact of the optimality tolerance in the solution time and the quality of the solutions for CPLEX barrier algorithm BAR and simplex QP-based algorithm  ALG1. The optimality tolerance of the barrier algorithm is controlled by the QCP convergence tolerance parameter (``BarQCPEpComp"), and in Algorithm~\ref{alg:coordinateDescent}, by the stopping condition $\frac{\left|\Delta_i\right|}{t}\leq \delta$. 

In both cases, a smaller optimality tolerance corresponds to a higher quality solution. We evaluate the quality of a solution as $\texttt{optgap}=\left|(z_{\min} -z)/z_{\min}\right|,$
where $z$ is the objective value of the solution found by an algorithm with a given tolerance parameter and $z_{\min}$ is the objective value of the solution found by the barrier algorithm with tolerance $10^{-12}$ (minimum tolerance value allowed by CPLEX). 
Table~\ref{tab:tolerance} presents the results for different tolerance values 
for a $30\times 30$ convex grid instance with $r=200$, $\alpha=0.1$, and $\Omega=1$. 
The table shows, for varying tolerance values and for each algorithm, the quality of the solution, the solution time in seconds, the number of iterations, and QPs solved (for ALG1). We highlight in bold the default tolerance used for the rest of the experiments
presented in the paper. The tolerance value $10^{-7}$ for the barrier algorithm corresponds to the default parameter in CPLEX.

{
\renewcommand\arraystretch{1.00}
\begin{table}[h!]
\caption{The effect of optimality tolerance.}
\begin{center}
\label{tab:tolerance}
\SingleSpacedXI
\scalebox{0.9}{
\begin{tabular}{ c | c c c | c c c c} 
\hline \hline
\multirow{2}{*}{\textbf{\texttt{Tolerance}}} &\multicolumn{3}{c|}{\texttt{BAR}}&\multicolumn{4}{c}{\texttt{ALG1}} \\
&$\texttt{optgap}$&\texttt{time}&\texttt{\#iter}&\texttt{optgap}&\texttt{time}&\texttt{\#iter}&\texttt{\#QP} \\
\hline
$10^{-1}$ & $8.65 \times 10^{-2}$&29.9 & 10 & $5.48\times 10^{-5}$ & 3.2 &835 & 4\\
$10^{-2}$ & $8.77 \times 10^{-3}$&41.5 & 15 & $3.24\times 10^{-7}$ & 4.2 &844 & 6\\
$10^{-3}$ & $6.98 \times 10^{-4}$&54.6 & 23 & $2.60\times 10^{-9}$ & 4.3 &844 & 8\\
$10^{-4}$ & $5.52 \times 10^{-5}$&62.9 & 27 & $2.12\times 10^{-11}$ & 4.7 &844 & 10\\
$10^{-5}$ & $3.72 \times 10^{-6}$&66.8 & 29 & $\boldsymbol{6.80\times 10^{-13}}$ & \textbf{5.2} &\textbf{844} & \textbf{12}\\
$10^{-6}$ & $7.12 \times 10^{-7}$&69.6 & 30 & $5.32\times 10^{-13}$ & 5.4 &844 & 13\\
$10^{-7}$ & $\boldsymbol{2.04 \times 10^{-8}}$&\textbf{72.0} & \textbf{32} & $5.15\times 10^{-13}$ & 6.0 &844 & 15\\
$10^{-8}$ & $2.65 \times 10^{-9}$&74.0 & 33 & $5.15\times 10^{-13}$ & 6.2 &844 & 17\\
$10^{-9}$ & $2.42 \times 10^{-10}$&75.9 & 34 & $5.15\times 10^{-13}$ & 6.6 &844 & 19\\
$10^{-10}$ & $1.97 \times 10^{-11}$&78.7 & 35 & $5.15\times 10^{-13}$ & 7.0 &844 & 21\\
$10^{-11}$ & $9.61 \times 10^{-12}$&79.6 & 36 & $5.15\times 10^{-13}$ & 7.4 &844 & 23\\
$10^{-12}$ & $0$&89.6 & 39 & $5.15\times 10^{-13}$ & 7.8 &844 & 25\\ \hline \hline
\end{tabular}
}
\end{center}
\end{table}
}

First observe that the solution time increases with reduced optimality tolerance for both algorithms. With lower tolerance, while the barrier algorithm performs more iterations, ALG1 solves more QPs; however, the total number of simplex iterations barely increases. For ALG1 the changes in the value of $t$ are very small between QPs, and the optimal bases of the QPs are thus the same. Therefore, using warm starts, the simplex method is able to find high precision solutions inexpensively.
ALG1 achieves much higher precision an order of magnitude faster than \rev{CPLEX} barrier algorithm. 
For the default tolerance parameters used in our computational experiments, Algorithm~\ref{alg:coordinateDescent} is several orders of magnitude more precise than the barrier algorithm.

\ignore{
	In most settings Algorithm~\ref{alg:coordinateDescent} is more precise than the barrier algorithm with a very low tolerance parameter ($10^{-11}$). Moreover we see that to achieve high precisions the simplex methods require solving more QPs, but the number of simplex iterations does not increase: the changes in the value of $t$ are very small between QPs, and the optimal bases of the QPs are thus the same. Therefore, using warm starts, the simplex methods are able to find high precision solutions inexpensively.
}

\subsubsection*{Effect of the nonlinearity parameter $\Omega$.}
\label{sec:resultsContinuousQ}

We now study the effect of changing the nonlinearity parameter $\Omega$.
Tables \ref{tab:contCard1000} and \ref{tab:contGrid30} show
the total solution time in seconds, the total number of simplex or barrier iterations, and the number of QPs solved in cardinality 
(1000 variables) and path instances (1760 variables), respectively. 
Each row represents the average over five instances for a rank ($r$) and density($\alpha$) configuration and algorithm used. 
For each parameter choice the fastest algorithm is highlighted in bold. \rev{ Figure~\ref{fig:performanceCont} also shows the total number of instances solved within the given time limit for each instance class.}
{
\renewcommand\arraystretch{0.75}

\begin{table}[h!]
\caption{The effect of nonlinearity (cardinality instances).} 
\begin{center}
\label{tab:contCard1000}
\setlength{\tabcolsep}{2pt}
\SingleSpacedXI
\scalebox{0.9}{
\begin{tabular}{ c c | c | c c c | c c c | c c c | c c c} 
\hline \hline
\multicolumn{2}{c |}{} &\multirow{2}{*}{\texttt{Method}}&\multicolumn{3}{c|}{$\Omega=1$} &\multicolumn{3}{c|}{$\Omega=2$} &\multicolumn{3}{c|}{$\Omega=3$}&\multicolumn{3}{c}{$\Omega=4$} \\
$r$&$\alpha$&&\texttt{time}&\texttt{\#iter} &\texttt{\#QP}&\texttt{time}&\texttt{\#iter} &\texttt{\#QP}&\texttt{time}&\texttt{\#iter} &\texttt{\#QP}&\texttt{time}&\texttt{\#iter} &\texttt{\#QP} \\
\midrule
\multirow{4}{*}{100}&\multirow{4}{*}{0.1}
&ALG1&1.0&22&20&1.1&53&24&1.3&104&29&1.4 & 123&26\\
&&ALG2&\textbf{0.8}&\textbf{41}&\textbf{14}&\textbf{0.9}&\textbf{95}&\textbf{15}&\textbf{0.9}&\textbf{150}&\textbf{15}&\textbf{1.1} &\textbf{219} & \textbf{16}\\
&&BAR&4.6&16&-&4.9&24&-&5.2&26&-& 5.1 & 25 & -\\
&&MOS&2.1&10&-&2.4&11&-&2.2&10&-& 2.4 & 11 & -\\
\midrule
\multirow{4}{*}{100}&\multirow{4}{*}{0.5}
&ALG1&1.1&33&23&1.1&69&24&1.5&144&37&1.6 & 192 & 30\\
&&ALG2&\textbf{0.8}&\textbf{60}&\textbf{14}&\textbf{0.9}&\textbf{125}&\textbf{15}&\textbf{0.9}&\textbf{200}&\textbf{15}& \textbf{1.1} & \textbf{251} & \textbf{16}\\
&&BAR&4.5&21&-&5.1&25&-&5.8&29&-& 5.7 & 29 & -\\
&&MOS&2.2&10&-&2.2&10&-&2.4&10&-& 2.4 & 11 & -\\
\midrule
\multirow{4}{*}{200}&\multirow{4}{*}{0.1}&ALG1&0.9&33&19&1.1&73&25&1.2&110&25& 1.4 & 157 & 26\\
&&ALG2&\textbf{0.8}&\textbf{49}&\textbf{14}&\textbf{0.9}&\textbf{126}&\textbf{14}&\textbf{0.9}&\textbf{172}&\textbf{14}& \textbf{1.1} & \textbf{259} & \textbf{15}\\
&&BAR&4.7&22&-&4.5&22&-&5.1&25&-&5.3 & 27 & -\\
&&MOS&2.4&11&-&2.6&12&-&2.5&11&-&2.4 & 11 & -\\
\midrule
\multirow{4}{*}{200}&\multirow{4}{*}{0.5}&ALG1&1.0&48&22&1.1&99&22&1.2&151&25&1.6 & 218 & 24\\
&&ALG2&\textbf{0.9}&\textbf{94}&\textbf{14}&\textbf{0.9}&\textbf{179}&\textbf{14}&\textbf{1.0}&\textbf{233}&\textbf{15}& \textbf{1.3} & \textbf{326} & \textbf{15}\\
&&BAR&4.4&21&-&4.9&24&-&5.2&26&-&5.8 & 31 & -\\
&&MOS&2.3&10&-&2.4&10&-&2.4&11&-&2.6 & 11 & -\\
\midrule
\multicolumn{2}{c|}{\multirow{4}{*}{avg}}&ALG1&1.0&34&21&1.1&73&24&1.3&127&29&1.5 & 173 & 27\\
&&ALG2&\textbf{0.8}&\textbf{61}&\textbf{14}&\textbf{0.9}&\textbf{131}&
\textbf{15}&\textbf{0.9}&\textbf{189}&\textbf{15} &\textbf{1.2} & \textbf{264} & \textbf{15}\\
&&BAR&4.3&20&-&4.9&24&
-&5.3&27&- &5.5 & 28 & -\\
&&MOS&2.3&10&-&2.4&11&
-&2.4&11&- &2.4 & 11 & -\\
\hline \hline
\end{tabular}
}
\end{center}
\end{table}
}
{
\renewcommand\arraystretch{0.75}
\begin{table}[h!]
\caption{The effect of nonlinearity (path instances).}
\begin{center}
\label{tab:contGrid30}
\setlength{\tabcolsep}{2pt}
\SingleSpacedXI
\scalebox{0.9}{
\begin{tabular}{ c c | c | c c c | c c c | c c c | c c c} 
\hline \hline
\multicolumn{2}{c |}{} &\multirow{2}{*}{\texttt{Method}}&\multicolumn{3}{c|}{$\Omega=1$} &\multicolumn{3}{c|}{$\Omega=2$} &\multicolumn{3}{c|}{$\Omega=3$}&\multicolumn{3}{c}{$\Omega=4$} \\
$r$&$\alpha$&&\texttt{time}&\texttt{\#iter} &\texttt{\#QP}&\texttt{time}&\texttt{\#iter} &\texttt{\#QP}&\texttt{time}&\texttt{\#iter} &\texttt{\#QP}&\texttt{time}&\texttt{\#iter} &\texttt{\#QP} \\
\midrule
\multirow{4}{*}{100}&\multirow{4}{*}{0.1}&ALG1&\textbf{4.9}&\textbf{940}&\textbf{12}&\textbf{7.0}&
\textbf{1,307}&\textbf{16}&8.5&1,505&18& 10.5&  1,756& 21\\
&&ALG2&5.4&1,283&11&\textbf{7.0}&\textbf{1,637}&\textbf{13}&\textbf{8.4}&\textbf{1,865}&\textbf{14}&\textbf{9.7} & \textbf{2,375} & \textbf{13}\\
&&BAR&81.1&26&-&64.3&21&-&55.3&16&-&56.8 & 16 & -\\
&&MOS&19.4&19&-&16.8&17&-&15.6&18&-&15.8 & 19 & -\\
\midrule
\multirow{4}{*}{100}&\multirow{4}{*}{0.5}&ALG1&\textbf{5.2}&\textbf{902}&\textbf{14}&8.2&1,191&21&9.3&1,391&21&11.0 & 1,641 & 21\\
&&ALG2&5.5&1,148&12&\textbf{7.2}&\textbf{1,474}&\textbf{13}&\textbf{8.6}&\textbf{1,772}&\textbf{14}&\textbf{9.6} & \textbf{2,020} & \textbf{14}\\
&&BAR&62.7&19&-&56.0&16&-&57.1&16&-&57.7 & 16 & -\\
&&MOS&17.4&17&-&17.8&18&-&15.7&19&-&14.9 & 17 & -\\
\midrule
\multirow{4}{*}{200}&\multirow{4}{*}{0.1}&ALG1&\textbf{4.9}&\textbf{836}&\textbf{14}&\textbf{6.3}&\textbf{1,053}&\textbf{15}&\textbf{8.3}&\textbf{1,220}&\textbf{18}&14.4&1,429 & 17\\
&&ALG2&\textbf{4.9}&\textbf{932}&\textbf{12}&6.8&1,377&13&8.4&1,671&13&\textbf{12.4}&\textbf{1,833}&\textbf{13}\\
&&BAR&76.8&25&-&60.1&18&-&66.0&20&-&128.3 &21&-\\
&&MOS&16.2&17&-&16.1&18&-&15.5&18&-&15.2 &18&-\\
\midrule
\multirow{4}{*}{200}&\multirow{4}{*}{0.5}&ALG1&\textbf{4.5}&\textbf{858}&\textbf{12}&\textbf{6.2}&\textbf{1,048}&\textbf{15}&\textbf{7.6}&\textbf{1,237}&\textbf{16}&\textbf{12.7} & \textbf{1,387} & \textbf{18}	\\
&&ALG2&4.9&978&12&6.8&1,363&13&8.5&1,626&13&15.9 &1,794 & 14\\
&&BAR&83.1&26&-&72.7&21&-&64.6&18&-&101.2 &16 &-\\
&&MOS&18.1&19&-&16.8&20&-&16.9&20&-&16.2 &19 &-\\
\midrule
\multicolumn{2}{c|}{\multirow{4}{*}{avg}}&ALG1&\textbf{4.9}&\textbf{884}&\textbf{13}&\textbf{6.9}&\textbf{1,150}&
\textbf{17}&\textbf{8.4}&\textbf{1,338}&\textbf{18}&12.1&1,553 &20 \\
&&ALG2&5.2&1,086&12&\textbf{6.9}&\textbf{1,463}&
\textbf{13}&8.5&1,734&13&\textbf{11.9} &\textbf{2,005} &\textbf{14} \\
&&BAR&75.9&24&-&63.2&19&
-&60.7&17&-&86.0&17&- \\
&&MOS&17.8&18&-&16.8&18&
-&15.9&19&-&15.5&18&- \\
\hline \hline
\end{tabular}
}
\end{center}
\end{table}
}

\begin{figure}[h!]
	\centering
	\begin{subfigure}[t]{1.0\columnwidth}
		\centering
		\includegraphics[trim={11cm 6cm 11cm 6cm},clip,width=0.8\textwidth]{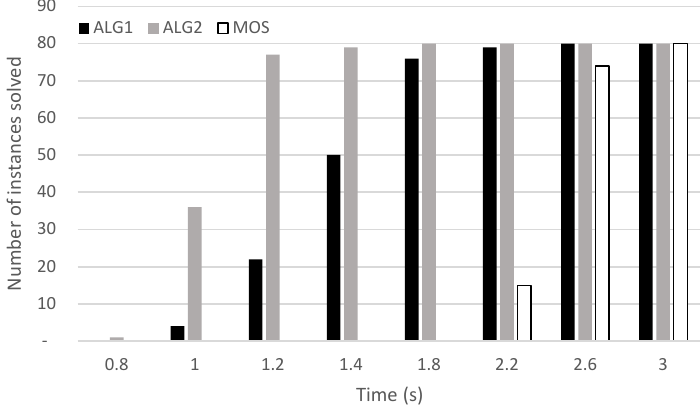}
		\caption{Cardinality instances}
	\end{subfigure}
	
	\begin{subfigure}[t]{1.0\columnwidth}
		\centering
		\includegraphics[trim={11cm 6cm 11cm 6cm},clip,width=0.8\textwidth]{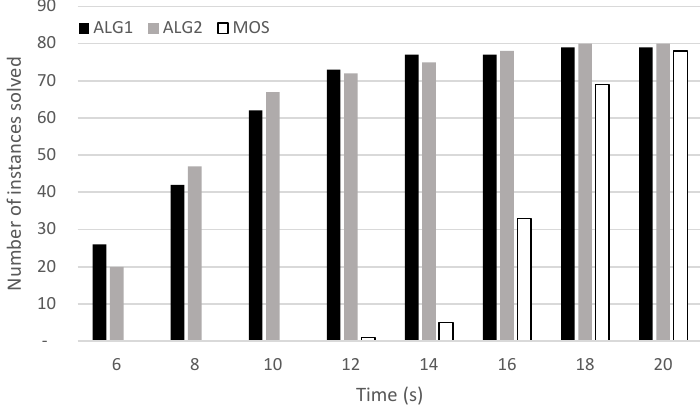}
		\caption{Path instances}
	\end{subfigure}
	\caption{Number of convex instances solved within a time limit for algorithms ALG1, ALG2 and MOS.}
	\label{fig:performanceCont}
\end{figure}

\ignore{
First observe that for both data sets, \rev{CPLEX} barrier algorithm is the slowest: it is 3.5 and 6 times slower than the simplex QP-based methods for the cardinality instances, and is up to 15 times slower for the path instances. 
}
\rev{
	Observe that, compared to CPLEX barrier algorithm, the simplex QP-based methods are  3.5 and 6 times faster for the cardinality instances and up to 15 times faster for the path instances. 
	Additionally, the simplex QP-based methods are two to three times faster than MOSEK barrier algorithm for the cardinality instances, and up to four times faster for the path instances. Figure~\ref{fig:performanceCont} shows that the simplex QP-based methods solve most of the instances well within the time required for MOSEK barrier algorithm to solve the easiest instance}. 


The barrier algorithm\rev{s} do not appear to be too sensitive to the nonlinearity parameter $\Omega$, whereas the simplex QP-based methods are faster for smaller $\Omega$. 
The number of simplex iterations in ALG1 increases with the nonlinearity parameter $\Omega$. Indeed, the initial problem solved by ALG1 is an LP (corresponding to $\Omega=0$), so as $\Omega$ increases the initial problem becomes a worse approximation, and more work is needed to converge to an optimal solution. 
Also note that Algorithm~\ref{alg:bisection} requires fewer QPs to be solved, but as a result it benefits less from warm starts (it requires more simplex iterations per QP than ALG1). Indeed, in ALG2 the value of $t$ changes by a larger amount at each iteration (with respect to ALG1), so the objective function of two consecutive QPs changes by a larger amount. \rev{Finally, note that although their runtime is very close, the performance of ALG2 is slightly better than ALG1 overall.}

\subsubsection*{Effect of the dimension}

Table \ref{tab:contCardSizes} presents a comparison of the algorithms for the convex cardinality instances with sizes 400, 800, 1600, and 3200.  Each row represents the average over five instances, as before, generated with parameters $r=200$, $\alpha=0.1$, and $\Omega=2$. 
Additionally, Figure~\ref{fig:improvement} shows the solution time for \rev{ALG1, ALG2 and MOS} as a function of the dimension ($n$).%

{
\renewcommand\arraystretch{1.00}
\begin{table}[h!]
\caption{The effect of dimension (cardinality instances).}
\begin{center}
\label{tab:contCardSizes}
\SingleSpacedXI
\scalebox{0.85}{
\begin{tabular}{  c | c c c | c c c | c c c| c c c  } 
\hline \hline
\multirow{2}{*}{\texttt{Method}}&\multicolumn{3}{c|}{$n=400$} &\multicolumn{3}{c|}{$n=800$}&\multicolumn{3}{c|}{$n=1600$} &\multicolumn{3}{c}{$n=3200$} \\
&\texttt{time}&\texttt{\#iter} &\texttt{\#QP}&\texttt{time}&\texttt{\#iter} &\texttt{\#QP}&\texttt{time}&\texttt{\#iter} &\texttt{\#QP}&\texttt{time}&\texttt{\#iter} &\texttt{\#QP}\\
\hline
ALG1&\textbf{0.2}&\textbf{43}&\textbf{20}&0.6&65&19&2.8&75&25 & 11.7 & 104 & 25\\
ALG2&\textbf{0.2}&\textbf{73}&\textbf{14}&\textbf{0.5}&\textbf{116}&\textbf{14}&\textbf{2.2}&\textbf{129}&\textbf{15}&\textbf{9.1}&\textbf{175}&\textbf{15}\\
BAR&0.3&21&-&2.4&22&-&22.1&27&-&204.9 & 30 & -\\
MOS&\textbf{0.2}&\textbf{9}&\textbf{-}&1.2&11&-&7.3&12&-&50.4 & 12 & -\\
\hline \hline
\end{tabular}
}
\end{center}
\end{table}
}

\begin{figure}[h!]
  \centering
  \includegraphics[width=0.9\textwidth]{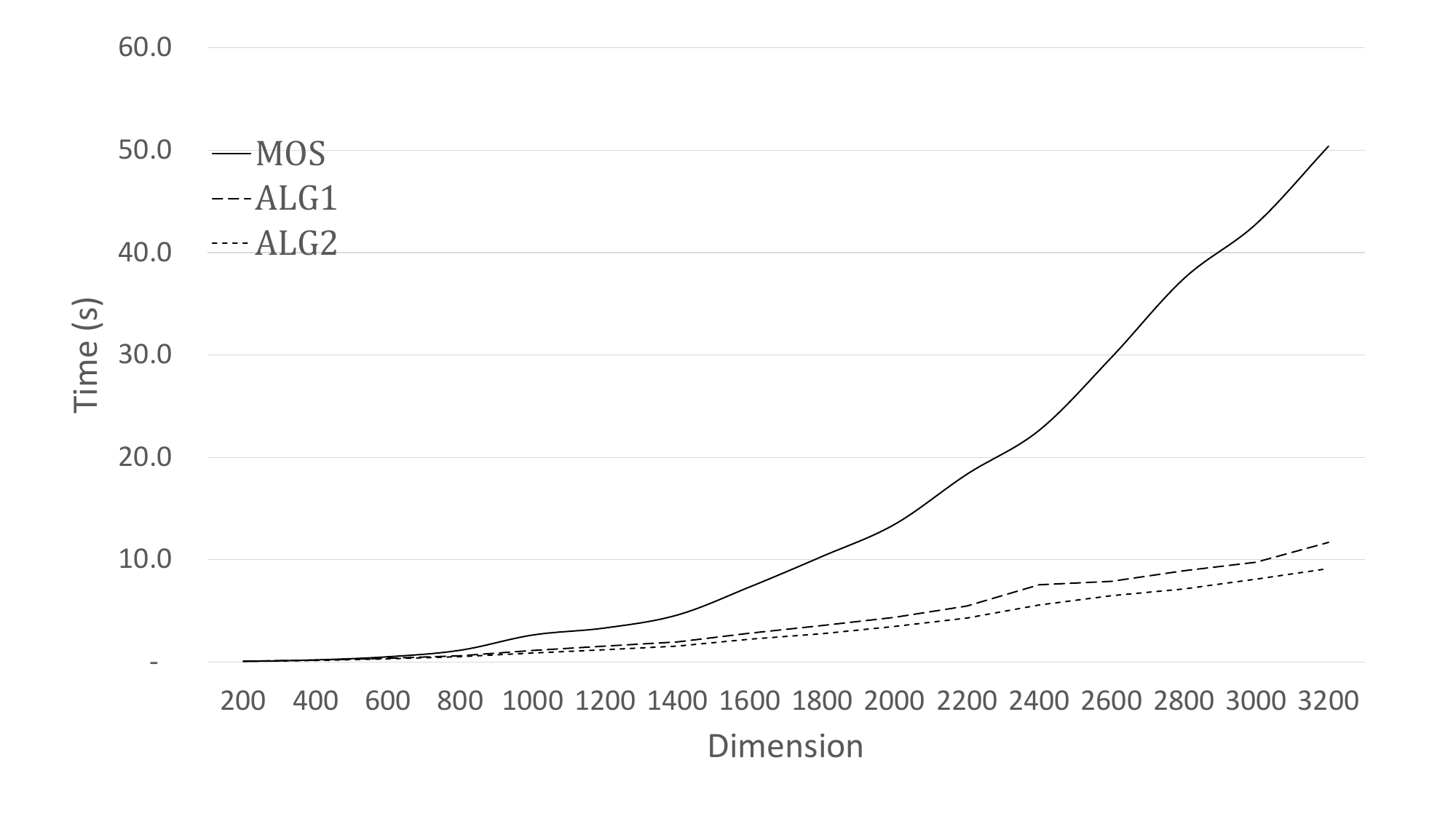}
  \caption{Solution time as a function of dimension.}
  \label{fig:improvement}
\end{figure}

Observe in Table \ref{tab:contCardSizes} that the number of QPs solved with the simplex-based algorithms does not depend on the dimension. The number of simplex iterations, however, increases with the dimension.  For $n=400$ all algorithms perform similarly and the problems are solved very fast. However, as the dimension increases, the simplex-based algorithms outperform the barrier algorithm\rev{s}, often by many factors. For $n=3200$, the fastest simplex-based algorithm ALG2 is more than 20 times faster than \rev{CPLEX} barrier algorithm\rev{, and more than five times faster than MOSEK barrier algorithm}. Similar results are obtained for other parameter choices and for the path instances as well. In summary, the simplex-based algorithms scale better with the dimension, and are faster by orders of magnitude for large instances. \rev{As in Section~\ref{sec:resultsContinuousQ}, ALG2 slightly outperforms ALG1 for the instances considered.}

\subsection{Discrete instances}

In this section we describe our experiments with the discrete counterpart \MICO. \rev{To the best of our knowledge, as of version 12.6.2 of CPLEX, there is no documented way to embed a user-defined convex solver such as Algorithm~\ref{alg:coordinateDescent}~or~\ref{alg:bisection} at the nodes of the CPLEX branch-and-bound algorithm.}
Therefore, in order to test the proposed approach for \MICO, we implement a rudimentary branch-and-bound algorithm described in Appendix~\ref{sec:branchAndBound}. The  algorithm uses a maximum infeasibility rule for branching, and does not employ presolve, cutting planes, or heuristics. We test the following configurations:
\begin{description}
	\item [BBA1] Branch-and-bound algorithm in Appendix~\ref{sec:branchAndBound} using Algorithm~\ref{alg:coordinateDescent} as the convex solver. 
	The first QP at each node (except the root node) is solved with CPLEX dual simplex method using the parent dual feasible basis as a warm start (as mentioned in Section~\ref{sec:warmStarts}) and all other QPs are solved with CPLEX primal simplex method using the basis from the parent node QP as a warm start.
	\rev{\item [BBA2] Branch-and-bound algorithm in Appendix~\ref{sec:branchAndBound} using Algorithm~\ref{alg:bisection} as the convex solver. 
	Algorithm~\ref{alg:bisection} resulted in the best performance in the continuous instances; however, unlike Algorithm~\ref{alg:coordinateDescent}, it cannot be naturally warm-started. Thus, in this configuration, each convex subproblem is solved without exploiting the solution from the parent node.}
	\item [BBBR] Branch-and-bound algorithm in Appendix~\ref{sec:branchAndBound}, using CPLEX barrier algorithm as the convex solver. This configuration does not use warm starts.
	\item [CXBR] CPLEX branch-and-bound algorithm with barrier solver, setting the branching rule to maximum infeasibility, the node selection rule to best bound, and disabling presolve, cuts and heuristics. In this setting CPLEX branch-and-bound algorithm is as close as possible to our branch-and-bound algorithm. 
	\item [CXLP] CPLEX branch-and-bound algorithm with LP outer approximations, setting the branching rule to maximum infeasibility, the node selection rule to best bound, and disabling presolve, cuts and heuristics. In this setting CPLEX branch-and-bound algorithm is as close as possible to our branch-and-bound algorithm. 
	\item [CXLPE] CPLEX branch-and-bound algorithm with LP outer approximations, setting the branching rule to maximum infeasibility, the node selection rule to best bound, and disabling cuts and heuristic. Since presolve is activated, CPLEX uses extended formulations described in \cite{Vielma2015}. Besides presolve, all other parameters  are set as in CXLP.
	\item [CXD] CPLEX default branch-and-bound algorithm with LP outer approximations. \rev{This algorithm utilizes all sophisticated features of CPLEX, such as presolver, cutting planes, heuristics, advanced branching and node selection rules.}
\end{description}
The time limit is set to two hours for each algorithm.
\ignore{
 \rev{Observe that, in both cases, the feasible regions correspond to the extreme points of an integral polytope. Therefore, many of techniques for strengthening formulations (e.g., cutting planes) used by CXD are not very effective for these instances. As a consequence, although CXD uses many additional features (e.g., heuristics and sophisticated branching rules), it is still comparable to the other formulations.}
}

Table \ref{tab:discCard200} presents the results for discrete cardinality instances with 200 variables and Table~\ref{tab:discGrid30} for the  discrete path instances with 1,740 variables ($30\times 30$ grid). Each row represents the average over five instances with varying rank and density parameters, and algorithm. The tables show the solution time in seconds,  the number of nodes explored in the branch-and-bound tree, the end gap after two hours as percentage, and the number of instances that are solved to optimality for varying values of $\Omega$. For each instance class we highlight in bold the algorithm with the best performance. \rev{Figure~\ref{fig:performanceDisc} shows, for each instance class, the total number of instances solved within given time limits for BBA1, CXLPE and CXD.}

{
\renewcommand\arraystretch{1.00}
\setlength{\tabcolsep}{0.5pt}
\begin{table}[h!]
\caption{Comparison for discrete cardinality instances.}
\begin{center}
\label{tab:discCard200}
\SingleSpacedXI
\scalebox{0.8}{
\begin{tabular}{ c c | c | c c c c | c c c c| c c c c | c c c c } 
\hline \hline
\multicolumn{2}{c |}{} &\multirow{2}{*}{\texttt{Method}}&\multicolumn{4}{c|}{$\Omega=1$} &\multicolumn{4}{c|}{$\Omega=2$} &\multicolumn{4}{c|}{$\Omega=3$}&\multicolumn{4}{c}{$\Omega=4$}\\
$r$&$\alpha$&&\texttt{time}&\texttt{nodes} &\texttt{egap}&\texttt{\#s}&\texttt{time}&\texttt{nodes} &\texttt{egap}&\texttt{\#s}&\texttt{time}&\texttt{nodes} &\texttt{egap}&\texttt{\#s} &\texttt{time}&\texttt{nodes} &\texttt{egap}&\texttt{\#s}\\
\hline
\multirow{7}{*}{100}&\multirow{7}{*}{0.1}&BBA1&\textbf{1}&\textbf{156}&\textbf{0.0}&\textbf{5}&\textbf{29}&\textbf{3,271}
&\textbf{0.0}&\textbf{5}&685&68,318&0.0&5&\textbf{3,644} & \textbf{272,527} & \textbf{0.1} & \textbf{4}\\
&&BBA2&3&156&0.0&5&83&3,271&0.0&5&1,823&68,317&0.0&5&6,218 & 172,489 & 0.3 & 2\\
&&BBBR&16&156&0.0&5&349&3,270&0.0&5&4,664&43,695&0.1&3& 7,200 & 23,324 & 1.2 & 0 \\
&&CXBR&35&276&0.0&5&513&3,497&0.0&5&5,260&32,782&0.2&2&7,200 & 17,169 & 1.3 & 0\\
&&CXLP&34&9,562&0.0&5&7,200&209,576&0.7&0&7,200&244,911&2.2&0 & 7,200 & 265,485 & 3.8 & 0\\
&&CXLPE&2&374&0.0&5&91&7,640&0.0&5&2,788&111,293&0.0&5&6,983 & 191,065 & 0.6 & 1\\
&&CXD&4&368&0.0&5&42&5,152&0.0&5&\textbf{640}&\textbf{58,076}&\textbf{0.0}&\textbf{5}&3,778 & 183,816 & 0.1 & 4\\
\midrule
\multirow{7}{*}{100}&\multirow{7}{*}{0.5}&BBA1&\textbf{1}&\textbf{87}&\textbf{0.0}&\textbf{5}&\textbf{59}&\textbf{6,274}&\textbf{0.0}&
\textbf{5}&\textbf{1,469}&\textbf{140,874}&\textbf{0.0}&\textbf{5}&\textbf{6,328} & \textbf{447,989} & \textbf{0.4} & \textbf{2}\\
&&BBA2&2&87&0.0&5&160&6,274&0.0&5&4,024&139,154&0.0&4& 7,200 & 223,921 & 0.7 & 0\\
&&BBBR&10&87&0.0&5&686&6,274&0.0&5&6,134&56,394&0.3&1 & 7,200 & 21,111 & 1.7 & 0\\
&&CXBR&24&183&0.0&5&1,027&6,734&0.0&5&6,399&39,710&0.4&1 & 7,200 & 20,101 & 2.0 & 0\\
&&CXLP&294&26,957&0.0&5&7,200&229,641&0.8&0&7,200&263,810&2.3&0 & 7,200 & 244,863 & 4.7 & 0\\
&&CXLPE&2&349&0.0&5&218&14,737&0.0&5&5,116&215,292&0.1&2&7,200 & 170,710 & 1.1 & 0\\
&&CXD&3&373&0.0&5&164&16,070&0.0&5&3,643&245,251&0.0&4& 7,042 & 336,814 & 0.8 & 1\\
\midrule
\multirow{7}{*}{200}&\multirow{7}{*}{0.1}&BBA1&\textbf{1}&\textbf{247}&\textbf{0.0}&\textbf{5}&\textbf{23}&\textbf{3,259}&\textbf{0.0}&\textbf{5}&\textbf{637}
&\textbf{55,248}&\textbf{0.0}&\textbf{5}&\textbf{3,761} & \textbf{344,662} & \textbf{0.2} & \textbf{4} \\
&&BBA2&5&247&0.0&5&79&3,259&0.0&5&2,083&55,248&0.0&5&6,975 & 242,548 & 0.4 & 2\\
&&BBBR&24&247&0.0&5&321&3,259&0.0&5&4,573&39,647&0.1&3 & 7,200 & 17,017 & 1.4 & 0\\
&&CXBR&52&460&0.0&5&540&3,711&0.0&5&5,295&34,090&0.2&2 & 7,200 & 13,490 & 1.5 & 0\\
&&CXLP&221&17,205&0.0&5&7,200&208,874&0.6&0&7,200&230,304&2.0&0 & 7,200 & 186,490 & 4.2 & 0\\
&&CXLPE&4&473&0.0&5&139&6,064&0.0&5&4,073&111,205&0.1&3& 7,200 & 158,866 & 0.9 & 0\\
&&CXD&5&360&0.0&5&60&6,413&0.0&5&1,410&67,577&0.0&5&7,044 & 349,653 & 0.5 & 1\\
\midrule
\multirow{7}{*}{200}&\multirow{7}{*}{0.5}&BBA1&\textbf{4}&\textbf{674}&\textbf{0.0}&\textbf{5}&
\textbf{194}&\textbf{24,636}&\textbf{0.0}&\textbf{5}&\textbf{1,674}&\textbf{156,632}&\textbf{0.0}&\textbf{5}&\textbf{5,778} & \textbf{526,215} & \textbf{0.3} & \textbf{2}\\
&&BBA2&15&674&0.0&5&633&24,635&0.0&5&3,446&98,028&0.1&4 & 7,200 & 259,040 & 0.6 & 0\\
&&BBBR&77&674&0.0&5&2,106&17,743&0.0&4&5,590&47,725&0.2&2&7,200 & 20,422 & 1.5 & 0\\
&&CXBR&104&680&0.0&5&2,452&15,816&0.0&4&6,127&38,973&0.3 & 1 & 7,200 & 14,955 & 1.7 & 0\\
&&CXLP&3,514&120,007&0.1&4&7,200&212,082&1.0&0&7,200&240,445&2.3&0 & 7,200 & 195,841 & 4.8 & 0\\
&&CXLPE&21&1,461&0.0&5&1,739&61,593&0.0&4&5,435&163,105&0.2&2& 7200 & 197,287 & 1.1 & 0\\
&&CXD&18&1,612&0.0&5&1,211&75,098&0.0&5&5,017&245,412&0.2&2&7,200 & 319,645 & 1.0 & 0\\
\midrule
\multicolumn{2}{c|}{\multirow{7}{*}{avg}}&BBA1&\textbf{2}&\textbf{291}&\textbf{0.0}&\textbf{20}&\textbf{76}&\textbf{9,360}&
\textbf{0.0}&\textbf{20}&\textbf{1,116}&\textbf{105,268}&\textbf{0.0}&\textbf{20}&\textbf{4,878}&\textbf{397,848}&\textbf{0.3}&\textbf{12} \\
&&BBA2&6&291&0.0&20&239&9,360&
0.0&20&2,844&90,187&0.0&18 & 6,898 & 224,500 & 0.5 &  4\\
&&BBBR&32&291&0.0&20&865&7,637&
0.0&19&5,240&46,865&0.2&9 & 7,200 & 20,469 & 1.4 & 0\\
&&CXBR&54&400&0.0&20&1,133&7,440&
0.0&19&5,770&36,389&0.3&6 & 7,200 & 16,429 & 1.6 & 0\\
&&CXLP&1,016&43,433&0.0&19&7,200&215,043&0.8&0&7,200&244,867&2.2&0 & 7,200 & 223,170 & 4.4 & 0\\
&&CXLPE&7&664&0.0&20&547&20,800&
0.0&19&4,353&139,632&0.1&12 & 7,146 & 179,482 & 0.9 & 1 \\
&&CXD&7&678&0.0&20&369&25,683&0.0&20&2,677&151,588&0.1&16 & 6,267 & 297,482 & 0.6 & 6\\
\hline \hline
\end{tabular}
}
\end{center}
\end{table}
}
{
\renewcommand\arraystretch{1.00}
\setlength{\tabcolsep}{0.5pt}
\begin{table}[h!]
\caption{Comparison for discrete path instances.}
\begin{center}
\label{tab:discGrid30}
\SingleSpacedXI
\scalebox{0.8}{
\begin{tabular}{ c c | c | c c c c | c c c c | c c c c | c c c c} 
\hline \hline
\multicolumn{2}{c |}{\textbf{}} &\multirow{2}{*}{\texttt{Method}}&\multicolumn{4}{c|}{$\Omega=1$} &\multicolumn{4}{c|}{$\Omega=2$} &\multicolumn{4}{c|}{$\Omega=3$}&\multicolumn{4}{c}{$\Omega=4$} \\
$r$&$\alpha$&&\texttt{time}&\texttt{nodes} &\texttt{egap}&\texttt{\#s}&\texttt{time}&\texttt{nodes} &\texttt{egap}&\texttt{\#s}&\texttt{time}&\texttt{nodes} &\texttt{egap}&\texttt{\#s}&\texttt{time}&\texttt{nodes} &\texttt{egap}&\texttt{\#s} \\
\hline
\multirow{7}{*}{100}&\multirow{7}{*}{0.1}&BBA1&\textbf{287}&\textbf{145}&\textbf{0.0}
&\textbf{5}&\textbf{4,511}&\textbf{1,774}&\textbf{0.0}&\textbf{5}&\textbf{7,200}&\textbf{2,720}&\textbf{5.9}&\textbf{0}&\textbf{7,200}&\textbf{2,360}&\textbf{13.9}&\textbf{0}\\
&&BBA2&619&142&0.0&5&6,871&1,105&1.1&1&7,200&1,278&7.9&0 & 7,200 & 974 & 17.2 & 0\\
&&BBBR&3,577&91&0.2&4&7,200&184&4.9&0&7,200&236&11.7&0&7,200 & 61 & 37.2 & 0\\
&&CXBR&7,200&67&20.8&0&7,200&79&$\infty$&0&7,200&129&$\infty$&0&7,200 & 13 & $\infty$ & 0\\
&&CXLP&533&2,428&0.0&5&7,200&24,776&4.5&0&7,200&20,099&15.0&0& 7,200 & 8,971& 30.2 & 0\\
&&CXLPE&802&315&0.0&5&6,726&1,967&2.9&1&7,200&2,585&23.5&0&7,200 & 2,377 & 45.1 & 0\\
&&CXD&1,466&164&0.0&5&4,655&1,176&0.0&5&7,200&2,428&7.4&0&7,200 & 2,047 & 16.6 & 0\\
\midrule
\multirow{7}{*}{100}&\multirow{7}{*}{0.5}&BBA1&\textbf{625}&\textbf{353}&\textbf{0.0}&\textbf{5}&\textbf{5,424}&\textbf{1,904}&\textbf{0.6}&\textbf{2}&\textbf{6,512}&\textbf{2,725}&\textbf{4.8}&\textbf{1}& \textbf{7,200} & \textbf{2,833} & \textbf{12.5} & \textbf{0}\\
&&BBA2&1,457&362&0.0&5&6,286&971&1.6&1&7,200&1,369&7.3&0&7,200 & 1,296 & 16.2 & 0\\
&&BBBR&6,071&134&0.7&2&7,200&175&4.9&0&7,200&162&11.5&0& 7,200 & 28 & $\infty$ & 0\\
&&CXBR&7,200&24&$\infty$&0&7,200&56&$\infty$&0&7,200&70&$\infty$&0& 7,200 & 13 & $\infty$ & 0\\
&&CXLP&1,132&6,187&0.0&5&7,200&23,671&4.4&0&7,200&16,851&12.8&0 & 7,200 & 8,180 & 23.8 & 0\\
&&CXLPE&967&607&0.0&5&6,420&2,077&3.4&1&7,200&3,070&16.2&0&7,200 & 3,036 & 42.3 & 0\\
&&CXD&1,645&267&0.0&5&\textbf{6,421}&\textbf{1,931}&\textbf{0.5}&\textbf{3}&7,200&2,659&5.6&0&7,200 & 4,166 & 12.8 & 0\\
\midrule
\multirow{7}{*}{200}&\multirow{7}{*}{0.1}&BBA1&\textbf{155}&\textbf{77}&\textbf{0.0}&\textbf{5}&\textbf{2,392}&\textbf{1,075}&\textbf{0.0}&\textbf{5}&\textbf{6,434}&\textbf{3,380}&\textbf{2.6}&\textbf{1}&\textbf{7,200} &\textbf{ 3,245} & \textbf{10.0} & \textbf{0}\\
&&BBA2&306&79&0.0&5&4,324&823&0.3&3&7,152&1,469&4.6&1 & 7,200 & 1,195 & 12.7 & 0\\
&&BBBR&3,245&76&0.0&5&7,200&171&2.4&0&7,200&180&10.5&0 & 7,200 & 33 & $\infty$ & 0\\
&&CXBR&7,200&34&$\infty$&0&7,200&45&$\infty$&0&7,200&68&$\infty$&0& 7,200 & 11 & $\infty$&0\\
&&CXLP&436&1,548&0.0&5&7,200&30,265&2.9&0&7,200&20,579&12.4&0 & 7,200 & 7,274 & 26.7 & 0\\
&&CXLPE&524&188&0.0&5&5,156&1,437&1.4&3&7,200&2,420&17.7&0&7,200 & 2,157 & 46.7 & 0\\
&&CXD&2,059&106&0.0&5&5,715&1,251&0.5&4&7,200&2,568&4.1&0&7,200 & 1,996 & 12.9 & 0\\
\midrule
\multirow{7}{*}{200}&\multirow{7}{*}{0.5}&BBA1&\textbf{321}&\textbf{196}&\textbf{0.0}&\textbf{5}&\textbf{3,953}&\textbf{2,286}&\textbf{0.3}&\textbf{4}&
\textbf{7,200}&\textbf{3,194}&\textbf{4.3}&\textbf{0}&\textbf{7,200}&\textbf{2,486}&\textbf{13.2}&\textbf{0}\\
&&BBA2&808&201&0.0&5&5,517&1,204&0.9&3&7,200&1,284&6.5&0&7,200 & 1,009 & 16.6 & 0\\
&&BBBR&4,826&113&0.2&3&7,200&173&3.9&0&7,200&176&12.7&0& 7,200 & 54 & 43.6 & 0\\
&&CXBR&7,200&20&$\infty$&0&7,200&51&$\infty$&0&7,200&89&$\infty$&0& 7,200 & 10 & $\infty$& 0\\
&&CXLP&859&4,989&0.0&5&7,200&28,007&4.4&0&7,200&18,873&13.3&0& 7,200 & 10,313 & 28.9 & 0\\
&&CXLPE&1,046&399&0.0&5&5,948&2,212&1.8&2&7,200&2,305&21.2&0&7,200 & 2,048 & 51.2 & 0\\
&&CXD&2,281&177&0.0&5&4,975&1,873&0.4&4&7,200&2,233&6.9&0&7,200 & 1,325 & 16.0 & 0\\
\midrule
\multicolumn{2}{c|}{\multirow{7}{*}{avg}}&BBA1&\textbf{347}&\textbf{193}&\textbf{0.0}&\textbf{20}&\textbf{4,070}&\textbf{1,760}&
\textbf{0.2}&\textbf{16}&\textbf{6,837}&\textbf{3,005}&\textbf{4.4}&\textbf{2} &\textbf{7,200} & \textbf{2,731} & \textbf{12.4}& \textbf{0}\\
&&BBA2&798&196&0.0&20&5,750&1,026&
1.0&8&7,189&1,350&6.6&1& 7,200 & 1,119 & 15.7 & 0\\
&&BBBR&4,430&103&0.3&14&7,200&176&
4.0&0&7,200&189&11.6&0 &7,200 & 44 & $\infty$ & 0\\
&&CXBR&7,200&36&$\infty$&0&7,200&58&
$\infty$&0&7,200&89&$\infty$&0&7,200 & 12 & $\infty$ & 0 \\
&&CXLP&740&3,788&0.0&20&7,200&26,680&
4.1&0&7,200&19,101&13.4&0 & 7,200 & 8,684 & 27.4 & 0\\
&&CXLPE&835&377&0.0&20&6,063&1,923&
2.4&7&7,200&2,595&19.7&0 &7,200 & 2,404 & 46.3 & 0\\
&&CXD&1,863&178&0.0&20&5,441&1,558&
0.3&16&7,200&2,472&6.0&0 & 7,200 & 2,383 & 14.6 & 0 \\
\hline \hline
\end{tabular}
}
\end{center}
\end{table}
}

\begin{figure}[h!]
	\centering
	\begin{subfigure}[t]{1.0\columnwidth}
		\centering
		\includegraphics[trim={11cm 6cm 11cm 6cm},clip,width=0.8\textwidth]{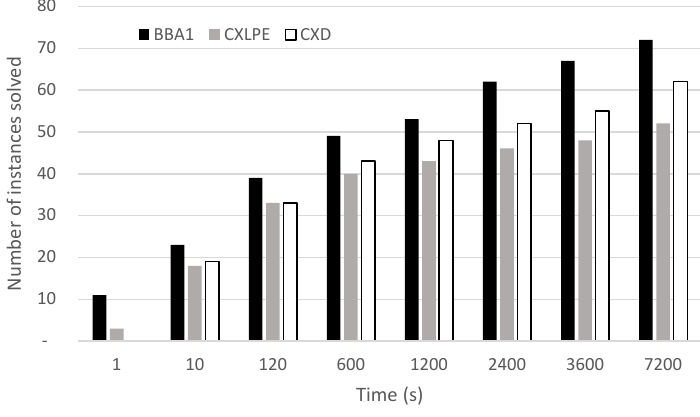}
		\caption{Cardinality instances}
	\end{subfigure}
	
	\begin{subfigure}[t]{1.0\columnwidth}
		\centering
		\includegraphics[trim={11cm 6cm 11cm 6cm},clip,width=0.8\textwidth]{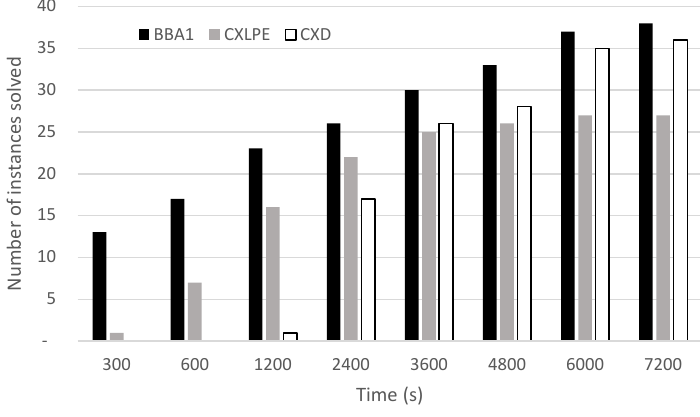}
		\caption{Path instances}
	\end{subfigure}
	\caption{Number of discrete instances solved within a time limit.}
	\label{fig:performanceDisc}
\end{figure}

\ignore{
We first give some general comments in Section~\ref{sec:discGeneralComments}, then in Section~\ref{sec:performanceBBA1} we comment on the performance of configuration BBA1, and finally in Section~\ref{sec:discWarmStarts} we study the impact of warm starts.
}

First of all, observe that the difficulty of the instances increases considerably for higher values of $\Omega$ due to higher integrality gap. The problems corresponding to high values of the density parameter $\alpha$ are also more challenging.

\subsubsection*{Performance of CPLEX branch-and-bound}

Among CPLEX branch-and-bound algorithms, CXD is the best choice when $\Omega\geq 2$. Configuration CXD is much more sophisticated than the other configurations, so a better performance is expected. However, note that for $\Omega=1$ configuration CXD is not necessarily the best. In particular in the path instances (Table~\ref{tab:discGrid30}) CXLP and CXLPE are 2.3 times faster than CXD. This result suggests that in simple instances the additional features used by CXD (e.g. cutting planes and heuristics) may be hurting the performance.

The extended formulations result in much stronger relaxations in LP based branch-and-bound and, consequently, the number of branch-and-bound nodes required with CXLPE is only a small fraction of the number of nodes required with CXLP. However, CXLPE requires more time to solve each branch-and-bound node, due to the higher number of variables and the additional effort needed to refine the LP outer approximations. For the cardinality instances, CXLPE is definitely the better choice and is faster by orders of magnitude. For the path instances, however, CXLP is not necessarily inferior: when $\Omega=1$ CXLP is competitive with CXLPE, and when $\Omega=3$ CXLP performs better.

The barrier-based branch-and-bound CXBR, in general, performs poorly. For the cardinality instances, it outperforms CXLP but is slower than the other algorithms. For the path instances it has the worst performance, often struggling to find even a single feasible solution (resulting in infinite end gaps).

\subsubsection*{Performance of BBA1}

Note that BBA1, \rev{BBA2} and BBBR are very simple and differ only by the convex node solver. BBA1 is faster than BBBR by an order of magnitude. \rev{BBA1 is also two to three times faster than BBA2, despite the fact that Algorithm~\ref{alg:bisection} is faster for convex problems. This improvement is due to the warm start capabilities of Algorithm~\ref{alg:coordinateDescent}.} BBA1 is considerably faster than the simplest CPLEX branch-and-bound algorithms CXBR and CXLP.

We see that BBA1 \rev{consistently} outperforms CXLPE (which uses presolve and extended formulations)\rev{, often by many factors. In fact, BBA1 resulted in better performance (faster solution times or lower end gaps) than CXLPE in every instance}. Observe that in the cardinality instances with $\Omega=1,2$ and path instances with $\Omega=1$,  BBA1 requires half the number of nodes (or less) compared to CXLPE to solve the instances to optimality (since the relaxations solved at each node are stronger), which translates into faster overall solution times. In the more difficult instances BBA1 is able to solve more instances to optimality, and the end gaps are smaller. 

Despite the fact that BBA1 is a rudimentary branch-and-bound implementation, it is faster than default CPLEX in most of the cases. \rev{Indeed, BBA1 outperforms CXD in 143 out of 160 instances tested. Figure~\ref{fig:performanceDisc} clearly shows that BBA1 solves more instances faster compared to CXLPE and CXD.} 

\ignore{
Moreover, in the instances where CXD is better the difference between the algorithms is small (less than 10\% difference in solution times\rev{, on average}), while in the other instances BBA1 is often faster by many factors. \rev{For example, for the path instances,  Figure~\ref{fig:performanceDisc} shows that BBA1 is able to solve close to 25 instances in 20 minutes or less, while CXD is able to solve only one instance in the same time limit, and requires close to one hour to solve 25 instances.}
\ignore{We observe that CXD is comparatively better for the instances with a low factor rank ($r=100$), and BBA1 is comparatively better for the instances with a high factor rank ($r=200$).
}
}

\subsubsection*{Warm starts}

To quantify the impact of warm starts, we plot in Figure~\ref{fig:timePerNode} the \emph{time per node} (computed as solution time divided by the number of branch-and-bound nodes) for BBA1, \rev{BBA2,} BBBR and CXLPE, and also plot the solution time for the corresponding convex instances with solvers ALG1\rev{, ALG2,} and BAR\footnote{The time per node is similar for all combinations of parameters $\Omega$, $r$ and $\alpha$. We plot the average for all instances with $\Omega=2$.}.

\begin{figure}[h!]
\centering
\begin{subfigure}[t]{0.5\columnwidth}
  \centering
  \includegraphics[width=1\textwidth]{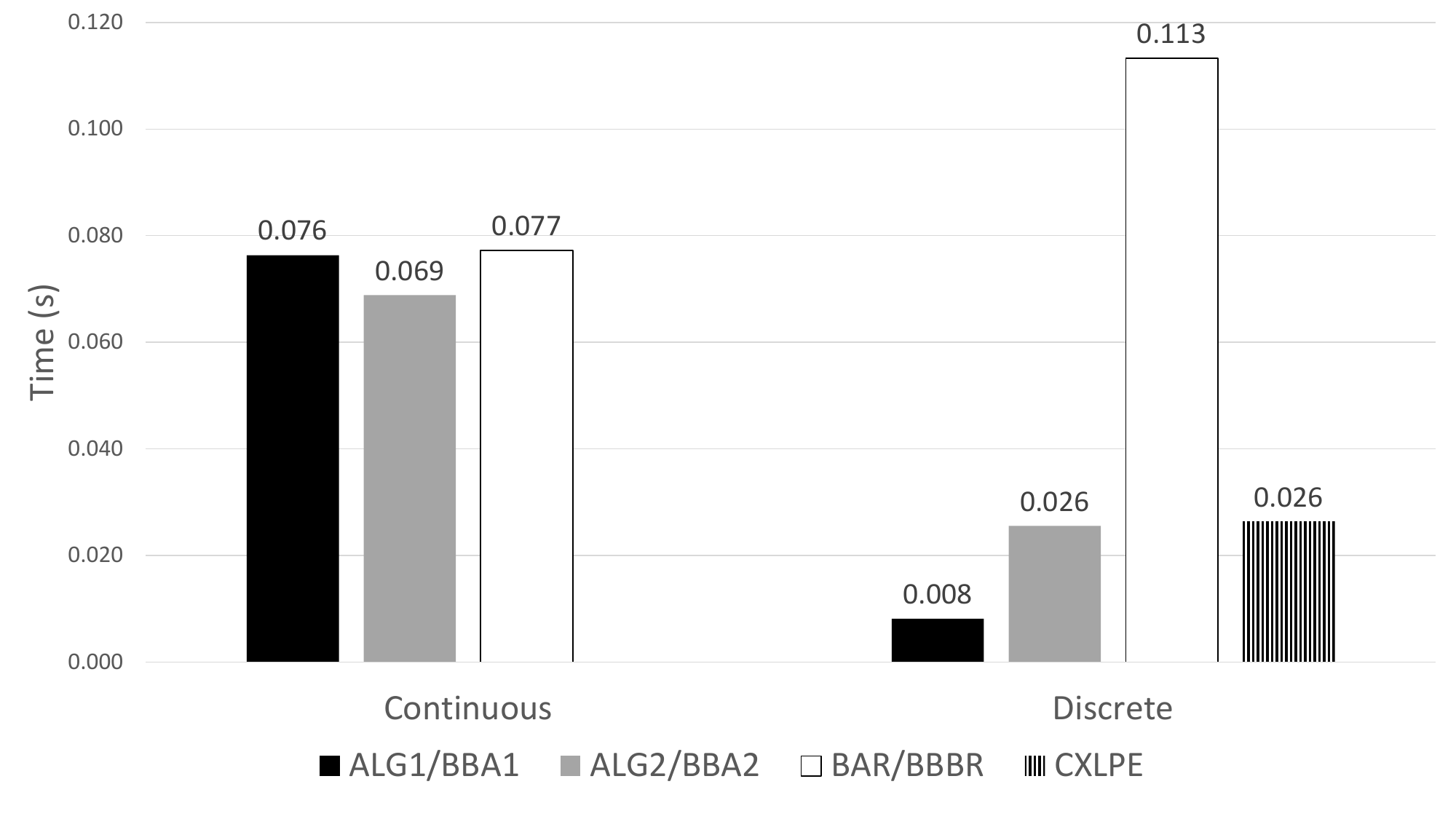}
  \caption{Cardinality instances}
\end{subfigure}
~
\begin{subfigure}[t]{0.5\columnwidth}
  \centering
  \includegraphics[width=1\textwidth]{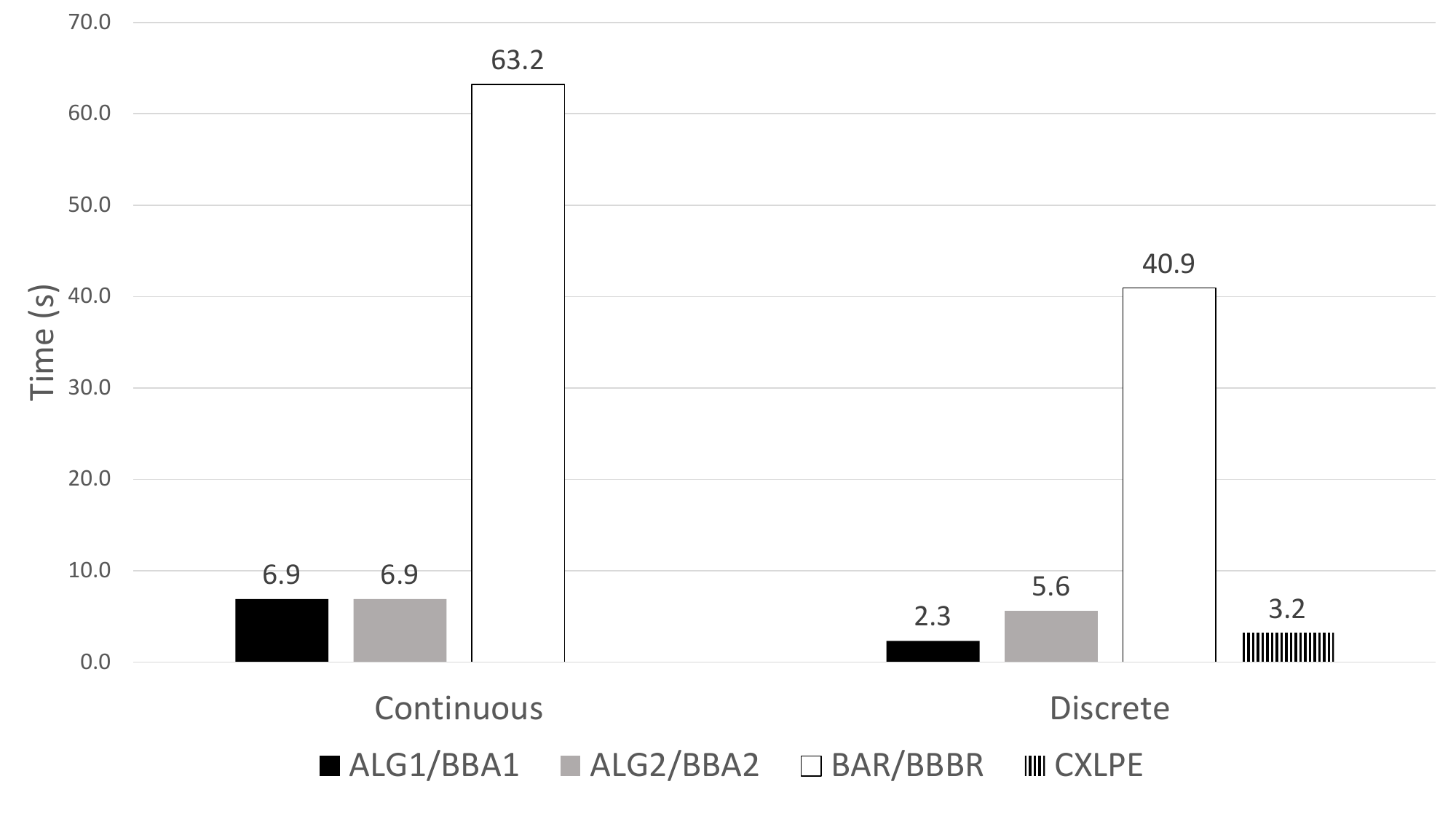}
  \caption{Path instances}
\end{subfigure}
\caption{Time per node.}
\label{fig:timePerNode}
\end{figure}

For the small cardinality instances with 200 variables, \rev{all three algorithms perform similarly for the convex instances}; however, \rev{Algorithm~\ref{alg:coordinateDescent}} is 15 times faster than barrier \rev{and more than three times faster than Algorithm~\ref{alg:bisection}} when used in branch-and-bound due to the node warm starts from dual feasible solutions. 
For the larger path instances with 1,740 variables, Algorithm~\ref{alg:coordinateDescent} \rev{is again close to three times faster than Algorithm~\ref{alg:bisection} and almost 20 times faster than barrier in discrete instances.}
Finally, observe that the solve time per node for BBA1 is smaller compared to CXLPE: the proposed simplex-based algorithm is thus as effective as the simplex method for extended formulations in exploiting warm starts. 
Moreover, it solves the nonlinear convex relaxations at each node to optimality, whereas CXLPE solves its LP relaxation.
The improved lower bounds lead to significantly small search trees.

We conclude that Algorithm~\ref{alg:coordinateDescent} is indeed suitable for branch-and-bound algorithms since it benefits from node warms starts from the parent nodes, resulting in a significant improvement in solution times.  

\ignore{

\section{Extensions}
\label{sec:extensions}

We discuss in this section how to extend the algorithms of Section~\ref{sec:algorithms} to SOCPs with linear objective and a single conic quadratic constraint using a Lagrangean relaxation. We have that
\begin{align*}
&\min_{x\in X }\left\{c'x:d'x+\Omega\sqrt{x'Qx}\leq b_0\right\}\\
=&\min_{x\in X,s\geq 0 }\left\{c'x:d'x+\Omega s\leq b_0, \sqrt{x'Qx}\leq s\right\}\\
=&\max_{\lambda\geq 0 }\min_{x\in X,s\geq 0 }\left\{c'x+\lambda \sqrt{x'Qx}-\lambda s:d'x+\Omega s\leq b_0\right\}\\
=&\max_{\lambda\geq 0 }\min_{x\in X,s,t\geq 0 }\left\{c'x+ \frac{\lambda}{2t}x'Qx+\frac{\lambda t}{2}-\lambda s:d'x+\Omega s\leq b_0\right\}\\
=&\max_{\lambda\geq 0}h(\lambda).
\end{align*}
The function $h$ is a concave univariate function, and the optimal $\lambda^*$ can be found using bisection search. Evaluating function $h$ for a fixed $\lambda$ requires solving a problem of the form \PO, which can be done using the algorithms of Section~\ref{sec:algorithms}. Moreover, each evaluation $h(\lambda)$ can be warm started using the optimal basis from the previous evaluation.

We tested a simple version of this Lagrangean relaxation approach, using Algorithm~\ref{alg:coordinateDescent} to solve the QPs, but our results were not as good as those reported in Section~\ref{sec:computational}. In the continuous instances the algorithm was slightly worse than CPLEX barrier algorithm (between 10\% and 20\% slower); in discrete instances, using a branch-and-bound algorithm based on Lagrangean relaxations, it was twice as slow as CPLEX LP branch-and-bound with extended formulations. Nevertheless, using the Lagrangean relaxation may be useful in problems where only a lower bound is required (i.e., solving for a fixed $\lambda$ instead of searching for $\lambda^*$), or in problems where the QPs are particularly easy to solve.

}

\section{Conclusions}
\label{sec:conclusions}

We consider minimization problems with a conic quadratic objective and linear constraints, which are natural generalizations of linear programming and quadratic programming. Using the perspective function we reformulate the objective and propose simplex QP-based algorithms that solve a quadratic program at each iteration. Computational experiments indicate that the proposed algorithms are faster than interior point methods by orders of magnitude, scale better with the dimension of the problem, return higher precision solutions, and, most importantly, are amenable to warm starts. Therefore, they can be embedded in branch-and-bound algorithms quite effectively.

\section*{Acknowledgement}
This research is supported, in part,
by grant FA9550-10-1-0168 from the Office of the Assistant Secretary of Defense for Research and Engineering.

\bibliographystyle{plainnat}
\bibliography{Bibliography}


\appendix

\section{Branch-and-bound algorithm}
\label{sec:branchAndBound}

Algorithm~\ref{alg:branchAndBound} describes the branch-and-bound algorithm used in computations. Throughout the algorithm, we maintain a list $L$ of the nodes to be processed. Each node is a tuple $(S,B,lb)$, where $S$ is the subproblem, $B$ is a basis for warm starting the continuous solver and $lb$ is a lower bound on the objective value of $S$. In line~\ref{line:initL} list $L$ is initialized with the root node. For each node, the algorithm calls a continuous solver (line~\ref{line:solverOracle}) which returns a tuple $(x,\bar{B},z)$, where $x$ is an optimal solution of $S$, $\bar{B}$ is the corresponding optimal basis and $z$ is the optimal objective value (or $\infty$ if $S$ is infeasible). The algorithm then checks whether the node can be pruned (lines \ref{line:bound1}-\ref{line:bound2}), $x$ is integer (lines \ref{line:integer1}-\ref{line:integer2}), or it further branching is needed (lines \ref{line:branch1}-\ref{line:branch2}).

\begin{algorithm}[h]
\caption{Branch-and-bound algorithm}
\label{alg:branchAndBound}
\begin{algorithmic}[1]
\renewcommand{\algorithmicrequire}{\textbf{Input:}}
\renewcommand{\algorithmicensure}{\textbf{Output:}}
\Require $P \text{, discrete minimization problem}$
\Ensure Optimal solution $x^*$
\State $ub\gets \infty$ \Comment{Upper bound}
\State $x^*\gets \emptyset$ \Comment{Best solution found}
\State $L\gets \left\{(P,\emptyset,-\infty)\right\}$ \label{line:initL} \Comment{list of nodes $L$ initialized with the original problem}

\While{$L\neq \emptyset$} \label{line:repeat}
\State $(S,B,lb)\gets \texttt{PULL}(L)$ \Comment{select and remove one element from $L$} \label{line:pull}

\If{$lb \geq ub$}\State \textbf{go to} line~\ref{line:repeat} \EndIf

\State $(x,\bar{B},z)\gets \texttt{SOLVE}(S,B)$ \Comment{solve continuous relaxation}\label{line:solverOracle}

\If{$z \geq ub$} \Comment{if $S$ is infeasible then $z=\infty$} \label{line:bound1}
\State \textbf{go to} line~\ref{line:repeat} \Comment{prune by infeasibility or bounds} \label{line:bound2}

\ElsIf{$x$ is integer} \label{line:integer1}
\State $ub\gets z$ \Comment{update incumbent solution}
\State $x^*\gets x$
\State \textbf{go to} line~\ref{line:repeat} \label{line:integer2} \Comment{prune by integer feasibility}

\Else\label{line:branch1}
\State $(S_{\leq},S_{\geq})\gets \texttt{BRANCH}(x)$  \Comment{create two subproblems}\label{line:branching}
\State $L\gets L\cup \left\{(S_{\leq},\bar{B},z) ,(S_{\geq},\bar{B},z) \right\}$ \Comment{add the subproblems to $L$}\label{line:branch2}
 \EndIf

\EndWhile
\State \Return $x^*$
\end{algorithmic}
\end{algorithm}

We now describe the specific implementations of the different subroutines. For branching (line~\ref{line:branching}) we use the maximum infeasibility rule, which chooses the variable $x_i$ with value $v_i$ furtherest from an integer (ties broken arbitrarily). The subproblems $S_{\leq}$ and $S_{\geq}$ in line~\ref{line:branch2} are created by imposing the constraints $x_i\leq \lfloor v_i \rfloor$ and $x_i\geq \lceil v_i \rceil$, respectively. The \texttt{PULL} routine in line~\ref{line:pull} chooses, when possible, the child of the previous node which violates the bound constraint by the least amount, and chooses the node with the smallest lower bound when the previous node has no child nodes. The list $L$ is thus implemented as a sorted list ordered by the bounds, so that the \texttt{PULL} operation is done in $O(1)$ and the insertion is done in $O(\log |L|)$ (note that in line~\ref{line:branch2} we only add to the list the node that is not to be processed immediately). A solution $x$ is assumed to be integer (line~\ref{line:integer1}) when the values of all variables are within $10^{-5}$ of an integer. Finally, the algorithm is terminated when  $\frac{ub-lb_{best}}{\left|lb_{best}+10^{-10}\right|}\leq 10^{-4}$, where $lb_{best}$ is the minimum lower bound among all the nodes in the tree. 

The maximum infeasibility rule is chosen due to its simplicity. The other rules and parameters correspond to the ones used in CPLEX branch-and-bound algorithm in default configuration.

\end{document}